\newtheorem{theorem}{Theorem}[section]
\newtheorem{proposition}[theorem]{Proposition}
\newtheorem{lemma}[theorem]{Lemma}
\newtheorem{corollary}[theorem]{Corollary}
\newtheorem{condition}[theorem]{Condition}
\newtheorem{remark}[theorem]{Remark}
\numberwithin{equation}{section}
\DeclareMathOperator*{\argmin}{argmin}
\DeclarePairedDelimiter\Floor\lfloor\rfloor
\title{\LARGE Weak convergence of the sequential empirical copula processes under long-range dependence}
\author[1]{Yusufu Simayi\thanks{Email: y.simayi@math.uni-goettingen.de; 
Alias: Yusup Ismayil}}
\affil[1]{\small Institute for Numerical and Applied mathematics, University of G\"{o}ttingen, 
Lotzestr. 16-18, 37083, G\"{o}ttingen Germany;\\
Email: y.simayi@math.uni-goettingen.de}
\date{\today}
\begin{document}

\maketitle \baselineskip 12pt

\begin{abstract}
We consider multivariate copula-based stationary time-series under Gaussian subordination.
Observed time series are subordinated to long-range dependent Gaussian processes and characterized by arbitrary marginal 
copula distributions. First of all, we establish limit theorems for the marginal and quantile marginal 
empirical processes of multivariate stationary long-range dependent sequences under Gaussian
subordination. Furthermore, we establish the asymptotic behavior of sequential empirical copula processes under non-restrictive
smoothness assumptions. The limiting processes in the case of long-memory sequences are quite different from the cases of 
of i.i.d. and weakly dependent observations.\\

{\bf Key words}: Copula; Long-range dependence; Gaussian subordination; functional limit theorem; multivariate time series;
Conditional quantiles.

{\bf MSC:} 62G05, 62G20
\end{abstract}

\section {Introduction}

\hspace {0.5cm} In recent years, it is increasingly popular to investigate the estimations of multivariate
stationary time series models by copula function. The parametric copula function capture the cross-sectional dependence between
time series, which provide us an important way to construct a multivariate time series models.
By using copulas to model the temporal dependence between the multivariate
time series and the applications of copulas in modelling financial has been widely studied in many literature,
see, e.g., \cite{Joe1997} and \cite{Patton2009}. Moreover, it is a main tool in the field of finance, 
Hydrology and insurance risk management, see, e.g., \cite{McNeil2005}, \cite{Frees1998} respectively.

Let $F$ be a joint distribution function with continuous marginal dirstributions $F_{1}, \cdots, F_{p}$ and a copula $C$ associated to $F$. 
Then by the Sklar theorem in \cite{Nelsen2006}, the joint distribution $F$ decompose as:
\begin{equation*}
F(x_{1}, \cdots, x_{p})=C(F_{1}(x_{1}), \cdots, F_{p}(x_{p})), \quad\text{for all}~ \mathbf{x}=(x_{1}, \cdots, x_{p})\in \mathbb{R}^{p}.
\end{equation*}
To estimate such a copula $C$, (sequential) empirical copula $C_{\Floor{nz}}$ is a main tool for statistical inference on unknown copulas. 
Let $\mathbf{X}_{1}, \cdots,\mathbf{X}_{n}$ be $p$-dimensional random vectors from strictly stationary sequence 
$(\mathbf{X}_{t})_{t\in\mathbb{Z}}$ with copula $C$. If we denotes $F_{\Floor{nz}}$ the empirical distribution function of the random
vectors $\mathbf{X}_{t}$, then the sequential empirical copula is defined as
\begin{equation*}
C_{\Floor{nz}}(\mathbf{u})=F_{\Floor{nz}}(F_{\Floor{nz},1}^{-1}(u_{1}), \cdots, F_{\Floor{nz},p}^{-1}(u_{p})), 
\quad \mathbf{u}=(u_{1}, \cdots, u_{p})\in[0, 1]^{p},
\end{equation*}
where $F_{\Floor{nz}}$ and $F_{\Floor{nz},j}$ are joint empirical and $j$-th marginal empirical distribution functions of sample 
respectively, and $F_{\Floor{nz},j}^{-1}$ is $j$-th sequential quantile marginal empirical distributions, see Section 2.2 for more details.
In this paper, we mainly consider the following sequential empirical copula process
\begin{equation}\label{1.1}
\mathbb{C}_{\Floor{nz}}(\mathbf{u})=\frac{1}{q_{n}}\sum_{t=1}^{\Floor{nz}}\left\{\mathbb{I}\{\mathbf{X}_{t}\leq 
F_{\Floor{nz}}^{-1}(\mathbf{u})\}-C(\mathbf{u})\right\} \quad\text{for}~ \mathbf{u}\in[0, 1]^{p}, z\in[0,1],
\end{equation}
where $q_{n}$ is a normalizing factor to be discussed in Section 2.  
To establish asymptotic behavior of the empirical copula process are fundamental step to estimate parametric 
copulas and testing problems. 

As we know, there exists a number of literature for studying the
asymptotic behaviors of the empirical copula processes in the context of i.i.d sequences, i.e., \cite{Ruschendorf1976},
\cite{Gaenssler1987}, \cite{vanderVaart1996},\cite{Fermanian2004},\cite{Tsukahara2005}, \cite{Bucher2011}, 
\cite{Segers2012}. More specifically, \cite{Gaenssler1987}, \cite{Tsukahara2005} and
\cite{Segers2012} prove convergence results of $\mathbb{C}_{n}$ (where $z=1$) by directly analyzing the behavior of the empirical
copula processes $\mathbb{C}_{n}$. The rest of the authors applied a functional delta method to derive the weak convergence of
$\mathbb{C}_{n}$. They analyze the mapping properties between spaces of functions and made a smoothness assumption that the copula $C$
has continuous partial derivatives on the closd unit cube, see e.g. \cite{Segers2012}. 

Furthermore, in the case of weakly dependent stationary sequences, the sequential empirical copula processes 
$\mathbb{C}_{\Floor{nz}}$ with order $q_{n}=o(n^{1/2})$: 
\begin{equation}\label{1.2}
\mathbb{C}_{\Floor{nz}}(\mathbf{u})=\sqrt{n}(C_{\Floor{nz}}(\mathbf{u})-C(\mathbf{u}))
\end{equation}
converges weakly to a Gaussian field $\mathbb{G}_{C}$ with a summable covariance functions, see, e.g., 
\cite{Doukhan2008}, \cite{Remillard2010}, \cite{Rmillard2012}, \cite{BucherRuppert2013} and \cite{Bucher2013}.
More precisely, \cite{Doukhan2008} establish limit theorems for $\mathbb{C}_{n}$ under $\eta$-dependence, \cite{BucherRuppert2013}
derive the asymptotic behavior of $\mathbb{C}_{n}$ for $\alpha$-mixing conditions, \cite{Rmillard2012} aslo establish
limit theorems for copula-based semiparametric models for multivariate time series under both interdependence and serial 
dependence. \cite{Bucher2013} also derive asymptotic behavior of empirical and sequential empirical copula process (\ref{1.2}) 
under general setting by using the functional delta method. As for the long-range dependent sequences, 
\cite{Ibragimov2017} and \cite{Chicheportiche2014} study the copula-based time series with long-memory and provide applications, but
they didn't discuss the asymptotic properties of the empirical copula processes and establish limit theorems.
\cite{Bucher2013} sign us that the derivation of weak convergence results under long-range dependence 
can be established based on the their method.

Recently, \cite{Beran2016} construct multivariate copula-based stationary time series models under 
Gaussian subordination and estbalish the limit theorem for a nonparametric estimator of the Pickands dependence function. 
The copula-based time series with cross-sectional dependence characterized by a copula function and subordinated 
to the nonlinear long-memory Gaussian processe. To the best of our knowledge, there are no further results 
have been found for the asymptotic behavior of the (sequential) empirical 
copula processes $\mathbb{C}_{\Floor{nz}}$ in (\ref{1.1}) under long-range dependence. In the case of long-range dependent sequences, 
the limiting distribution is not a Gaussian anymore as well as the covariance function is not summable.
To establish a limit theorem for the sequential empirical copula processes
$\mathbb{C}_{\Floor{nz}}$ we first purpose to establish asymptotic properties of the sequential marginal empirical processes.
This process plays an important role in nonparametric statistics such as change-point analysis and testing issue. 

However, in the case of long-memory sequences, the limiting distributions and the asymptotic behavior are very
different from the cases of i.i.d. and short memory sequences and asymptotically not normal.  For literature on
the asymptotic properties of the empirical processes with one dimensional case, see e.g., \cite{Taqqu1975}, \cite{Dehling1989}
and \cite{Leonenko2001}. For the bivariate and multivariate cases, see e.g., \cite{Marinucci2005}, \cite{Taufer2015} and 
\cite{Arcones1994}, \cite{Bai2013}, \cite{Buchsteiner2015} and \cite{Mounirou2016}, respectively. The usual way to derive a
limiting distribution of multivariate empirical processes in the context of long-memory is using 
the multivariate unifrom reduction principle. 

The remainder of this paper is organized as follows. In section 2, we first present a multivariate copula-based stationary time series
with long-memory and Hermite polynomials expansions of the empirical processes. 
In section 3, we establish the weak convergence of the sequential marginal and quantile marginal empirical processes. 
In section 4, we derive the weak convergence of usual empirical empirical processes with different Hermite ranks.
In section 5, we derive the asymptotic behavior of the sequential empirical copula processes $\mathbb{C}_{\Floor{nz}}$.
In section 6, we address our conclusions and some open problems.

\section{Empirical copula processes under long-range dependence}

\subsection{Multivariate Copula-based long-range dependent sequences}

\hspace {0.5cm}
Let $\mathbf{X}_{t}=(X_{t,1}, \cdots, X_{t, p})_{t\in\mathbb{Z}}^{'}$ be a vector valued strictly stationary process in $\mathbb{R}^{p}$.
Let $F$ be the joint multivariate distribution function of $\mathbf{X}_{t}$ with continuous marginal distributions $F_{1}, \cdots, F_{p}$
for all $t\in\mathbb{Z}, j=1,\cdots, p$, then by Sklar theorem in \cite{Nelsen2006}, there exists an unique copula $C$ such that
\begin{equation}\label{2.1}
F(x_{1}, \cdots, x_{p})=C(F_{1}(x_{1}),\cdots, F_{p}(x_{p})), \quad\mbox{for all}~ x_{j}\in\mathbb{R}, j=1, \cdots, p,
\end{equation}
where the marginal distributions $F_{1}, \cdots, F_{p}$ are standard uniform on $[0,1]$. We now consider that the observable
stationary time series $\{X_{t,j}\}_{t\in\mathbb{Z}}$ are subordinated to the long-memory Gaussian processes.

\begin{condition}\label{Condition-2.1}
Assume that $\{\eta_{t, j}\}_{t\in\mathbb{Z}}, j=1, \cdots, p$ are unobservable, independent univariate stationary 
Gaussian processes with $\mathbf{E}[\eta_{t, j}]=0, \mathbf{Var}[\eta_{t, j}]=1$ and the autocovariance functions satisfying
\begin{align*}
\gamma_{\eta, j}(k)&:=\mbox{Cov}(\eta_{t,j}, \eta_{t+k, j})\sim L_{\eta,j}(k)|k|^{2d_{j}-1}, \quad\mbox{for all}\quad d_{j}\in(0, 1/2), 
\end{align*}
where $"\sim"$ denotes that the ratio between the right and left hand sides tends to one and the functions
$L_{\eta, j}(k)$ are slowly varying at infinity and are positive for all large $k$, i.e.,
\begin{equation*}
\lim_{y\rightarrow\infty}\frac{L_{\eta, j}(\alpha y)}{L_{\eta, j}(y)}=1, \quad \forall \alpha>0.
\end{equation*}
\end{condition}

Since the univariate Gaussian processes $\eta_{t, j}$ are mutually independent, the spectral density of $X_{t, j}$ are
defined by 
\begin{equation*}
f_{X, j}(k)\sim c_{X, j}|k|^{-2d_{j}} \quad (\text{as}~ |k|\rightarrow 0).
\end{equation*}
for some constants $0<c_{X, j}<\infty$.
Let $G(\eta_{t}):=(G_{1}(\eta_{t}), \cdots, G_{p}(\eta_{t}))^{'}: \mathbb{R}^{p}\mapsto \mathbb{R}^{p}$
be Borel measurable functions satisfying with $\mathbf{E}[G_{j}(\eta_{t,j})]=0$ and $\mathbf{E}[G_{j}^{2}(\eta_{t})]<\infty$
for $j=1, \cdots, p$. Then the subordinated processes $(\mathbf{X}_{t})_{t\in\mathbb{Z}}$ are defined by
\begin{equation*}
\mathbf{X}_{t}=G(\eta_{t})=(G_{1}(\eta_{t}), \cdots, G_{p}(\eta_{t}))^{'}
\end{equation*}
in $\mathbb{R}^{p}$, where $G(\eta_{t})$ is nonlinear functions of stationary Gaussian processes.

Note that the memory parameters $d_{j}\in (0, \frac{1}{2})$ are not necessarily all equal 
and suppose to be known. For the memory parameters $d_{j}\in(0,1/2)$, the stationary Gaussian sequences
$(\eta_{t,1}, \cdots, \eta_{t,p})$ display long-memory behavior and have nonsummable autocovariance functions, i.e.,
$\sum_{k\in\mathbb{Z}}\gamma_{\eta, j}(k)=\infty$.  If $d_{j}=0$,
the autocovariance functions $0<\sum_{k\in\mathbb{Z}}\gamma_{\eta, j}(k)<\infty$,
then the Gaussian processes $\eta_{t,j}$ exhibit short-memory behavior. For more details of linear and nonlinear long-memory 
processes and applications, we refer to \cite{Beran1994}, \cite{Beran2013}. For the simplicity, here we only consider the 
case with $d_{j}=d, j=1, \cdots, p$. 

According to Lemma 1 in \cite{Beran2016}, we find that the observed process $(\mathbf{X}_{t})_{t\in\mathbb{Z}}$ 
has a copula $C$ as in (\ref{2.1}) and the multivariate Hermite rank of each $G_{j}$ is one. More precisely, we have
\begin{equation}\label{2.2}
\mathbf{X}_{t}=\begin{pmatrix}
                X_{t,1}\\
                X_{t,2}\\
                \vdots\\
                X_{t, p}
               \end{pmatrix}:=
               \begin{pmatrix}
                G_{1}(\eta_{t})\\
                G_{2}(\eta_{t})\\
                \vdots\\
                G_{p}(\eta_{t})
               \end{pmatrix}=
                \begin{pmatrix}
                F_{1}^{-1}(\Phi(\eta_{t,1}))\\
                F_{2}^{-1}(C_{2|1}^{-1}(\Phi_{2}(\eta_{t,2})|\Phi_{1}(\eta_{t,1})))\\
                \vdots\\
                F_{p}^{-1}(C_{p|1, \cdots, p-1}^{-1}(\Phi_{p}(\eta_{t,p})|\Phi_{1}(\eta_{t, 1}), \cdots, \Phi_{p-1}(\eta_{t, p-1})))
               \end{pmatrix},
\end{equation}
where $\Phi(\cdot)$ denotes the cumulative standard normal distribution and $C_{j|1, \cdots, j-1}^{-1}$ the conditional quantile copula
of $U_{j}=F_{j}(X_{j})$ given $U_{1}, \cdots, U_{j-1}$, for the deatils, see \cite{Beran2016}.  

It can be seen from (\ref{2.2}) that the functions $G_{j}$ are monotonically nondecreasing.
Let $L^{2}$ be the space of square integrable functions with respect to $p$-dimensional standard normal distribution $\eta_{t}
=(\eta_{t, 1}, \cdots, \eta_{t, p})^{'}$. An orthogonal basis of these space is given by 
\begin{equation*}
H_{r_{1},\cdots, r_{p}}(\eta_{t})=H_{r_{1}}(\eta_{t, 1})\cdots H_{r_{p}}(\eta_{t, p}), \quad r_{1}, \cdots, r_{p}\in \mathbb{N},
\end{equation*}
with Hermite rank $r=r_{1}+\cdots+r_{p}$, where $H_{r_{j}}$ is an one-dimensional Hermite polynomial of order $r_{j}$, i.e., 
\begin{equation*}
H_{r_{j}}(x)=(-1)^{r_{j}}\exp\left(\frac{x^{2}}{2}\right)\frac{\mathrm{d}^{r_{j}}}{\mathrm{d}x^{r_{j}}}\exp\left(-\frac{x^{2}}{2}\right),
\quad r_{j}=0,1 ,2,\cdots, x\in\mathbb{R}.
\end{equation*}
with $H_{0}(x)=1, H_{1}(x)=x, H_{2}(x)=x^{2}-1$ etc. Thus, 
the Hermite coefficient of $G_{j}$ with respect to the Hermite Polynomials $H_{e(l)}(\eta)=\eta_{l}$ is defined by
\begin{equation}\label{2.3}
J_{j, l}(H, G)=\langle H_{e(l)}(\eta_{t}), G_{j}(\eta_{t})\rangle=\mathbf{E}[\eta_{t, l}G_{j}(\eta_{t})].
\end{equation}
where $e(l)=(e_{1}(l), \cdots, e_{p}(l)^{'}$ in $\mathbb{N}^{p}$ is $l$th unit vector.

\subsection{Hermite polynomials expansions of the empirical copula processes}
\hspace{0.5cm}
Let $(\mathbf{X}_{1},  \cdots, \mathbf{X}_{n})$ be samples from the Gaussian subordinated long-memory processes 
$\mathbf{X}_{t}$. Define $U_{t, j}=F_{j}(X_{t, j})$ with $X_{t, j}=G_{j}(\eta_{t})$ for $t\in \{1, \cdots, n\}$
 and $j=1,\cdots, p$. Let $\mathbf{U}_{t}=(U_{t, 1}, \cdots, U_{t, p})^{'}$ be the random vectors 
sampled from copula $C$. The corresponding sequential empirical distribution functions are given by
respectively. 
\begin{align*}
F_{\Floor{nz}, j}(x_{j})=
\begin{cases}
0, &z\in[0, \frac{1}{n}),\\               
\frac{1}{\Floor{nz}}\sum\limits_{t=1}^{\Floor{nz}}\mathbb{I}
\{G_{j}(\eta_{t})\leq x_{j}\}, &x_{j}\in\mathbb{R},~z\in[\frac{1}{n}, 1].             
\end{cases}
\end{align*}
and 
\begin{equation*}
D_{\Floor{nz}, j}(u_{j})=
\begin{cases}
0, &z\in[0, \frac{1}{n}),\\               
\frac{1}{\Floor{nz}}\sum\limits_{t=1}^{\Floor{nz}}\mathbb{I}
\{F_{j}G_{j}(\eta_{t})\leq u_{j}\}, &u_{j}\in[0,1],~z\in[\frac{1}{n}, 1].             
\end{cases}
\end{equation*}
for $\mathbf{x}=(x_{1}, \cdots, x_{p})^{'}\in\mathbb{R}^{p}$ and $\mathbf{u}=(u_{1}, \cdots, u_{p})^{'}\in[0, 1]^{p}$ and $z\in[0,1]$.
The marginal quantile empirical distributions associated to $F_{\Floor{nz},j}$ and $D_{\Floor{nz},j}$ are
\begin{equation*}
F_{\Floor{nz},j}^{-1}(u_{j})=\inf\{x\in\mathbb{R}: F_{\Floor{nz},j}(x_{j})\geq u_{j}\}=
\begin{cases}
\widehat{X}_{t,j}, &\text{if}~ u_{j}\in (0, 1], ~z\in[0, 1],\\
-\infty,         &\text{if}~ u_{j}=0, z\in[0, 1].
\end{cases}
\end{equation*}
and
\begin{align*}
D_{\Floor{nz},j}^{-1}(u_{j})=\inf\{u_{j}\in[0,1]: D_{\Floor{nz},j}(u_{j})\geq u_{j}\}&=
\begin{cases}
\widehat{U}_{t,j}, &\mbox{if}~ u_{j}\in(0, 1],~ z\in[0, 1],\\
0, &\mbox{if}~ u_{j}=0, ~z\in[0, 1],
\end{cases}
\end{align*}
where $\widehat{X}_{t,j}$ and $\widehat{U}_{t,j}$ are the vectors of ascending order statistics of the $j$th coordinate samples
$X_{t, 1}, \cdots, X_{t, p}$ and $U_{t, 1}, \cdots, U_{t, p}$, respectively.
Let $F_{\Floor{nz}}$ be the joint empirical distribution function of $(\mathbf{X}_{1}, \cdots,\mathbf{X}_{n})$. 
Then the associated empirical copula is defined by
\begin{equation}\label{2.4}
C_{\Floor{nz}}(\mathbf{u})=F_{\Floor{nz}}(F_{\Floor{nz},1}^{-1}(u_{1}), \cdots, F_{\Floor{nz},p}^{-1}(u_{p})), 
\quad\text{for all}~ \mathbf{u}=(u_{1}, \cdots, u_{p})^{'}\in[0,1]^{p}.
\end{equation}
where
\begin{equation*}
F_{\Floor{nz}}(\mathbf{x})=\begin{cases}
0, &z\in[0, \frac{1}{n}), \\                        
\frac{1}{\Floor{nz}}\sum\limits_{t=1}^{\Floor{nz}}\mathbb{I}\{G_{1}(\eta_{t})\leq x_{1}, 
\cdots, G_{p}(\eta_{t})\leq x_{p}\},  &x_{j}\in\mathbb{R}, ~z\in[\frac{1}{n}, 1].                       
\end{cases}
\end{equation*}
for all $\mathbf{x}=(x_{1}, \cdots, x_{p})^{'}\in\mathbb{R}^{p}$.
Since $X_{t,j}\leq F_{\Floor{nz},j}^{-1}(u_{j})$ if and only if $U_{t,j}\leq D_{\Floor{nz},j}^{-1}(u_{j})$ for all $u_{j}\in[0,1], 
t=1, \cdots, n$, and $j=1, \cdots, p$, it allows us to write the empirical copula (\ref{2.4}) as
\begin{equation*}
C_{\Floor{nz}}(\mathbf{u})=D_{\Floor{nz}}(D_{\Floor{nz},1}^{-1}(u_{1}), \cdots, D_{\Floor{nz},p}^{-1}(u_{p})).
\end{equation*}
Finally, the corresponding sequential uniform and genegral processes to the empirical distribution 
functions $D_{\Floor{nz}}$ and $D_{\Floor{nz},j}($ are given as
\begin{equation}\label{2.5}
\mathbb{B}_{\Floor{nz}}(\mathbf{u})=\frac{\Floor{nz}}{q_{n}}\left\{D_{\Floor{nz}}(\mathbf{u})-C(\mathbf{u})\right\}, 
\quad \mathbb{B}_{\Floor{nz},j}(u_{j})=\frac{\Floor{nz}}{q_{nj}}\left\{D_{\Floor{nz},j}(u_{j})-u_{j}\right\},
\end{equation}
and the processes to the empirical quantile $D_{\Floor{nz},j}^{-1}$ and empirical copula distribution $C_{\Floor{nz}}$
are give as
\begin{equation}\label{2.6}
\mathbb{C}_{\Floor{nz}}(\mathbf{u})=\frac{\Floor{nz}}{q_{n}}\left\{C_{\Floor{nz}}(\mathbf{u})-C(\mathbf{u})\right\}, 
\quad \mathbb{Q}_{\Floor{nz},j}(u_{j})=\frac{\Floor{nz}}{q_{nj}}\left\{D_{\Floor{nz},j}^{-1}(u_{j})-u_{j}\right\},
\end{equation}
for all $u_{j}\in[0, 1], z\in[0, 1]$, where $q_{n}$ and $q_{nj}$ are the normalizing factors 
corresponding to the joint and marginal empirical distributions, respectively, which will be given later.

In the following, we need some notations as given in \cite{Dehling1989} for expanding the class of functions
$\mathbb{I}\{\mathbf{U}_{t}\leq \mathbf{u}\}-C(\mathbf{u})$ and $\mathbb{I}\{U_{t,j}\leq u_{j}\}-u_{j}$, respectively.
Let $H_{r}(\cdot)$ be denote the $r$-th order Hermite polynomials with $H_{r}(x)=H_{r_{1}}(x_{1})\cdots H_{r_{p}}(x_{p})$.
Since the functions $H_{r_{1}}(x_{1})\cdots H_{r_{p}}(x_{2})$ form a orthogonal system in $L^{2}$-space with
\begin{equation*}
L^{2}(\mathbb{R}^{p}, \phi(x_{1})\cdots, \phi(x_{p}))=\left\{G: \mathbb{R}^{p}\mapsto\mathbb{R},~\text{and}~\|G\|^{2}<\infty\right\},
\end{equation*}
where $\|G\|^{2}=\int_{\mathbb{R}^{2}} G^{p}(\mathbf{x})\phi(x_{1})\cdots\phi(x_{p})\mathrm{d}x_{1}\cdots
\mathrm{d}x_{p}<\infty$ and $\phi(\cdot)$ is standard Gaussian density function. From the orthogonality properties of Hermite
polynomials for a zero-mean, unit variance of Gaussian variables $\eta_{t, 1}, \cdots, \eta_{t,p}$, we have
\begin{align}\label{2.7}
\mathbf{E}[H_{rt}(\eta_{t,k})H_{rt}(\eta_{t,l})]=r_{l}!\delta_{rl}^{rk}\{E[\eta_{t,l}\eta_{t,k}]\}^{rt}, \quad
\delta_{rl}^{rk}=\begin{cases}
1 &\mbox{for}~r_{l}=r_{k}\\
0 &\mbox{for}~r_{l}\neq r_{k}
\end{cases},
\end{align}
where $\delta_{rl}^{rk}$ is Kronecker's delta. By Condition \ref{Condition-2.1}, we have
\begin{align*}
\sigma_{r_{1}\cdots r_{p}}^{2}(n)&=\frac{1}{r_{1}!\cdots r_{p}!}\mathbf{E}\left[\frac{1}{n}\sum_{t=1}^{n}H_{r_{1}}
(\eta_{t,1})\cdots H_{r_{p}}(\eta_{t, p})\right]^{2}\nonumber\\
&=\frac{1}{n^{2}}\sum_{t_{1}=1}^{n}\cdots\sum_{t_{p}=1}^{n}\gamma_{\eta, {t_{1}}}^{r_{1}}(|t_{1}-s_{1}|)\cdots 
\gamma_{\eta,t_{p}}^{r_{p}}(|t_{p}-s_{p}|). 
\end{align*}
where
\begin{align*}
\sigma_{r_{1}\cdots r_{p}}^{2}(n)&
\sim \begin{cases}
a(r_{1} \cdots r_{p}, d_{1} \cdots d_{p}) L_{\eta, 1}^{r_{1}}(n)\cdots L_{\eta, p}^{r_{p}}(n)n^{p-\sum\limits_{j=1}^{p}r_{j}(2d_{j}-1)},
&\sum\limits_{j=1}^{p}r_{j}(1-2d_{j})<1,\\
an^{-1}, &\sum\limits_{j=1}^{p}r_{j}(1-2d_{j})>1,
     \end{cases}
\end{align*}
as $n\rightarrow \infty$. According to the argument as in Taqqu in Theorem 3.1 \cite{Taqqu1975} or \cite{Mounirou2016} and view of
(\ref{2.1}) and (\ref{2.7}), we have
\begin{equation*}
a(r_{1} \cdots r_{p}, d_{1} \cdots d_{p})=\frac{pr_{1}!\cdots ! r_{p}!}
{\prod_{j=1}^{p}[j-r_{1}(1-2d_{1})-\cdots-r_{p}(1-2d_{p})]}.
\end{equation*}
If the memory parameter are all equal $d=d_{1}=\cdots=d_{p}$ and $r_{1}+\cdots+r_{p}=r$, then 
\begin{equation*}
a(r_{1}\cdots r_{p}; d)=\frac{pr_{1}!\cdots r_{p}!}{[1-r(1-2d)]\cdots[p-r(1-2d)]}.
\end{equation*}

Now we expand the class of joint distribution function $\mathbb{I}\{\mathbf{U}_{t}\leq \mathbf{u}\}-C(\mathbf{u})$
in Hermite polynomials for any fixed $\mathbf{u}\in[0, 1]^{p}$:
\begin{equation}\label{2.8}
\mathbb{I}\{\mathbf{U}_{t}\leq \mathbf{u}\}-C(\mathbf{u})=\sum_{r=1}^{\infty}\cdots\sum_{r_{1}+\cdots+r_{p}=r}^{\infty}
\frac{J_{r_{1}\cdots r_{p}}(\mathbf{u})}{r_{1}!\cdots ! r_{p}!}\prod_{j=1}^{p}H_{r_{j}}(\eta_{t,j}), 
\end{equation}
with the Hermite coefficient $J_{r_{1}\cdots r_{p}}(\mathbf{u})$, i.e.,
\begin{equation}\label{2.9}
J_{r_{1}\cdots r_{p}}(\mathbf{u})=\mathbf{E}[\mathbb{I}(\mathbf{U}_{t}\leq \mathbf{u})]\prod_{j=1}^{p}H_{r_{j}}(\eta_{t,j}),
\end{equation}
where $r$ is the Hermite rank of the function (\ref{2.9}), i.e.,
$r=r(\mathbf{u})=\min\{r_{1}+\cdots+r_{p}=r: J_{r_{1}\cdots r_{p}}(\mathbf{u})\neq 0~ \mbox{for all}~ u_{j}\in [0,1]\}$. As described in
\cite{Marinucci2005}, we find that the stochastic order of magnitude of $C_{n}(\mathbf{u})$ is determined by the lowest
 $r_{1}(1-2d_{1})+\cdots+r_{p}(1-2d_{p})$ terms corresponding to $J_{r_{1}, \cdots, r_{p}}(\mathbf{u})$. We define
\begin{equation*}
(r_{i1}^{*}, \cdots, r_{ip}^{*})=\argmin\{r_{1}(1-2d_{1})+\cdots+r_{p}(1-2d_{p})\}
\end{equation*}
such that the Hermite coefficient $J_{r_{i1}^{*} \cdots r_{ip}^{*}}(\mathbf{u})\neq 0$ for $i=1, \cdots, M$. If all the memory paramters
are equal, i.e., $d_{j}=d$, then the stochastic order $r(1-2d)$ is constant for all $i$ and the Hermite rank of 
$\mathbb{I}\{\mathbf{U}_{t}\leq \mathbf{u}\}-C(\mathbf{u})$ follows from the result of \cite{Arcones1994}. 

\begin{condition}\label{Condition-4.1}
(regularity assumption). Let 
\begin{equation*}
q_{n}(r_{i1}^{*}, \cdots, r_{ip}^{*})=a(r_{i1}^{*}, \cdots, r_{ip}^{*}; d_{1}, \cdots, d_{p})^{\frac{1}{2}}
L_{\eta, 1}^{\frac{r_{i1}^{*}}{2}}(n)\cdots L_{\eta, p}^{\frac{r_{ip}^{*}}{2}}(n)n^{1-\frac{r_{i1}^{*}+\cdots+r_{ip}^{*}}{2}}.
\end{equation*}
be the square root of the asymptotic variance of $\sum_{t=1}^{n}\prod_{j=1}^{p}H_{j}(\eta_{t,j})$
 with $r_{i1}^{*}(1-2d_{1})+\cdots+r_{ip}^{*}(1-2d_{p})<1$.
Suppose that the exist some finite and positive constants  $\rho_{1}, \cdots, \rho_{M}>0$. Then 
\begin{equation*}
\lim_{n\rightarrow \infty}\frac{q_{n}(r_{i1}^{*}, \cdots, r_{ip}^{*})}{q_{n}(r_{11}^{*}, \cdots, r_{1p}^{*})}=:\rho_{i}, \quad
i=1, 2, \cdots, M
\end{equation*}
exists and non-zero. Moreover, we have
\begin{equation*}
\rho_{i}=\frac{a(r_{i1}^{*}, \cdots, r_{ip}^{*}; d_{1}, \cdots, d_{p})}{a(r_{11}^{*}, \cdots, r_{1p}^{*}; d_{1}, \cdots, d_{p})}
\lim_{n\rightarrow\infty}\frac{L_{\eta, 1}^{r_{i1}^{*}/2}(n)\cdots L_{\eta, p}^{r_{ip}^{*}/2}(n)}
{L_{\eta, 1}^{r_{11}^{*}/2}(n)\cdots L_{\eta, p}^{r_{1p}^{*}/2}(n)}. 
\end{equation*}
\end{condition}

Define the random processes, which can be represented by multiple Wiener-It\^{o} integrals, see e.g., \cite{Dobrushin1979} or 
\cite{Mounirou2016}:
\begin{align}\label{2.10}
\mathbb{H}_{r_{1}\cdots r_{p}}(z)&=\int_{R^{r_{1}}}^{'}\cdots\int_{R^{r_{p}}}^{'}
h(z; \lambda_{1}^{2},\cdots, \lambda_{r_{2}}^{2})\cdots h(z; \lambda_{1}^{p},\cdots, \lambda_{r_{p-1}}^{p})\nonumber\\
&\prod_{j_{1}=1}^{r_{1}}|\lambda_{j_{1}}|^{(d_{1}-1)}\cdots\prod_{j_{p}=1}^{r_{p}}|\lambda_{j_{p}}|^{(d_{p}-1)}
\prod_{j_{1}=1}^{r_{1}}W_{1}(\mathrm{d}\lambda_{j_{1}})\cdots\prod_{j_{p}=1}^{r_{p}}W_{p}(\mathrm{d}\lambda_{j_{p}})
\end{align}
and
\begin{equation*}
h(z; \lambda_{1}^{j},\cdots, \lambda_{r_{j}}^{j})=\frac{1}{\sqrt{a(r_{1}, \cdots, r_{p} ,d_{1}, \cdots, d_{p})}}
\frac{\exp(iz(\lambda_{1}^{j}+\cdots+\lambda_{r_{j}}^{j}))-1}{i(\lambda_{1}^{j}+\cdots+\lambda_{r_{j}}^{j})}
\end{equation*}
where $W_{j}(\cdot),~ j=1, \cdots, p$ are independent copies of a complex-valued Gaussian white noise on $R^{1}$ and
the symbol $\int_{\mathbb{R}^{r_{j}}}^{'}$ refer that the domain of integration exclude the hyperdiagonals with
$\lambda_{j}=\pm\lambda_{j^{(1)}}, j\neq j^{(1)}$ for $j, j^{(1)}=1, \cdots, r_{j}$, $\lambda$ denote Lebesgue measure. Note that
the process $\mathbb{H}_{r,s}(z)$ defined in (\ref{2.10}) is called an $r$ th order Hermite process.
The coefficients are
\begin{equation*}
a(r_{1}\cdots r_{p}; d_{1}\cdots d_{p})=\frac{[1-r(2d-1)/2][1-r(2d-1)]}{r!\{2\Gamma(2d-1)\sin[(3/2-d)\pi]\}^{r}}.
\end{equation*}
The Hermite process $\mathbb{H}_{r_{1}\cdots r_{p}}(z)$ is called fractional Brownian motion if $r=1$. 
If $r=2$, the process $\mathbb{H}_{2}(z)$ is called Rosenblatt process, see \cite{Taqqu1975}. 
Otherwise the limiting process $\mathbb{H}_{r_{1}, \cdots r_{p}}(z)$ is Hermite process of order $r=\sum_{j=1}^{p}r_{j}$.

\section{Weak convergence of the marginal and quantile marginal empirical processes}

\hspace{0.4cm}
Before we estbalish the weak convergence of the sequential empirical copula processes, it is a main step to derive the weak convergence 
of the marginal and qunatile marginal sequential empirical processes $\mathbb{B}_{\Floor{nz},j}$ and $Q_{\Floor{nz}, j}$ with the case that
the memory parameters $d_{j}=d$ are all equal for all $j=1, \cdots, p$.

Since the marginal distributions $F_{1}, \cdots F_{p}$ are conditionally subordinated to the long-memory Gaussian processes, 
the class of functions $\mathbb{I}\{G_{j}(\eta_{t})\leq x_{j}\}-F_{j}(x_{j})$ are expanded in multivariate Hermite polynomials for fixed
$x_{j}\in\mathbb{R}$:
\begin{align*}
\mathbb{I}\{G_{j}(\eta_{t})\leq x_{j}\}-F_{j}(x_{j})&=\sum_{m_{j}=1}\sum_{\sum_{j=1}^{p}r_{j}=m_{j}}^{\infty}
\frac{T_{j(r_{1}, \cdots, r_{p})}(x_{j})}{\prod_{j=1}^{p}r_{j}!}\prod_{j=1}^{p}H_{r_{j}}(\eta_{t, j})
\end{align*}
where $j(r_{1}, \cdots, r_{p})$ is the number of indeces $r_{1}, \cdots, r_{p}$ that are equal to $j$ 
and the corresponding Hermite coefficients of these functions are denoted by
\begin{equation*}
T_{j(r_{1}, \cdots, r_{p})}(x_{j})=\mathbf{E}\left[\mathbb{I}\{G_{j}(\eta_{t})\leq x_{j}\}-F_{j}(x_{j})\right]
\prod_{j=1}^{p}H_{r_{j}}(\eta_{t, j}).
\end{equation*}
Then we can define the Hermite rank of the functions $\mathbb{I}\{G_{j}(\eta_{t})\leq x_{j}\}-F_{j}(x_{j})$ as 
\begin{equation*}
m_{j}=\min\{m_{j}(x_{j})=\sum_{j=1}^{p}r_{j}: T_{j(r_{1}, \cdots, r_{p})}(x_{j})\neq 0~ \mbox{for all}~ x_{j}\in\mathbb{R}\}.
\end{equation*}
Since the uniform marginals $F_{j}$ are continuous, the class of functions $\mathbb{I}\{F_{j}(G_{j}(\eta_{t})\leq u_{j}\}
-u_{j}$ can be expanded in Hermite polynomials for any fixed $u_{j}\in(0, 1)$:
\begin{align*}
\mathbb{I}\{F_{j}(G_{j}(\eta_{t}))\leq u_{j}\}-u_{j}&=\sum_{m_{j}=1}\sum_{\sum_{j=1}^{p}r_{j}=m_{j}}^{\infty}\frac{J_{j(r_{1}, \cdots, r_{p})}
(u_{j})}{\prod_{j=1}^{p}r_{j}!}\prod_{j=1}^{p}H_{r_{j}}(\eta_{t, j}),
\end{align*}
the corresponding Hermite coefficients are denoted by
\begin{equation*}
J_{j(r_{1}, \cdots, r_{p})}(u_{j})=\mathbf{E}\left[\mathbb{I}\{F_{j}(G_{j}(\eta_{t}))\leq u_{j}\}-u_{j}\right]
\prod_{j=1}^{p}H_{r_{j}}(\eta_{t, j}).
\end{equation*}
Since $T_{j(r_{1}, \cdots, r_{p})}(D^{-1}(u_{j}))=J_{j(r_{1}, \cdots, r_{p})}(u_{j})$ for any $u_{j}\in [0, 1]$, where 
$D^{-1}(u_{j})=\inf\{x_{j}: F_{j}(x_{j})=u_{j}\}$ for $u\in(0, 1]$. The Hermite rank of the class of functions 
$\mathbb{I}\{F_{j}(G_{j}(\eta_{t}))\leq u_{j}\}-u_{j}$ are also same the rank of $\mathbb{I}\{G_{j}(\eta_{t})\leq x_{j}\}-F_{j}(x_{j})$.

Similarly, by Theorem 3.1 of \cite{Taqqu1975}, the normalizing factors corresponding to the each marginal 
empirical processes are given by
\begin{equation}\label{3.1}
q_{nj}^{2}\sim c_{j}(r, d)\prod_{j=1}^{p}L^{r_{j}}(n)n^{2-\sum_{j=1}^{n}r_{j}(1-2d)}, 
\end{equation}
with the constants 
\begin{equation}\label{3.2}
c_{j}(r, d)=\frac{2}{\prod_{j=1}^{p}[j-r_{j}(1-2d)]}. 
\end{equation}

\cite{Dehling1989} studied the asymptotic properties of the one-dimensional empirical process, \cite{Taufer2015},
\cite{Marinucci2005} and \cite{Buchsteiner2015} also performed the bivariate and multivariate expansion of one dimensional empirical
processes in Hermite polynomials. We now present the limiting distribution of all marginal empirical processes in the following
lemma since it is useful results to establish the weak convergence of the empirical copula processes under long-range dependent data.

\begin{lemma}\label{Lemma-3.1}
Under Condition \ref{Condition-2.1} and (\ref{2.2}), the class of functions $\mathbb{I}\{X_{t,j}\leq x_{j}\}-F_{j}(x_{j})$ 
have Hermite ranks $m_{j}:=\min\{m_{j}(x_{j}): T_{j(r_{1}, \cdots, r_{p})}(x_{j})\neq 0~\text{for some}~ x_{j}\in\mathbb{R}\}$
with $0<\sum_{j=1}^{p}r_{j}(1-2d)<1$. Then
\begin{equation}\label{3.3}
\mathbb{F}_{\Floor{nz}, j}(x_{j})=\frac{\Floor{nz}}{q_{nj}}\left\{F_{\Floor{nz}}(x_{j})-F_{j}(x_{j})\right\}\rightsquigarrow 
\sum_{\sum_{j=1}^{p}r_{j}=m_{j}}\frac{T_{j(r_{1}, \cdots, r_{p})}(x_{j})}{\prod_{j=1}^{p}r_{j}!}\mathbb{Z}_{j(r_{1}, \cdots, r_{p})}(z)
\end{equation}
converges weakly in $D([-\infty, \infty])\times [0,1]$ equipped with sup-norm, where the symbol ''$\rightsquigarrow$'' 
denotes weak convergence and $q_{nj}$ is as in (\ref{3.1}). The processes 
$\mathbb{Z}_{j(r_{1}, \cdots, r_{p})}(z)$, $\sum_{j=1}^{p}r_{j}=m_{j}$ are give as multiple Wiener-It$\hat{o}$ integrals of the form
\begin{align}\label{3.4}
\mathbb{Z}_{j(r_{1}, \cdots, r_{p})}(z)&=\frac{1}{\sqrt{c_{j}(r, d)}}\int_{\mathbb{R}^{m_{j}}}^{'}\frac{e^{iz(\lambda_{1}
+\cdots+\lambda_{m_{j}})-1}}{i(\lambda_{1}+\cdots+\lambda_{m_{j}})}\prod_{j=1}^{m_{j}}|\lambda_{j}|^{d-1}\nonumber\\
&\quad\times\prod_{j_{1}=1}^{r_{1}}W_{1}(\mathrm{d}\lambda_{i})\cdots\prod_{j_{p}=r_{p-1}+1}^{r_{p}}W_{p}(\mathrm{d}\lambda_{i}).
\end{align}
where $W_{1}, \cdots, W_{p}$ are independent copies of a complex valued gaussian white noise on $\mathbb{R}$ and the coefficient
$c_{j}$ is given in (\ref{3.2}).
\end{lemma}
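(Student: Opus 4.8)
The plan is to adapt the reduction-principle scheme of \cite{Dehling1989} to the present multivariate Gaussian-subordinated setting. Writing the normalized sequential marginal process as
\begin{equation*}
\mathbb{F}_{\Floor{nz}, j}(x_{j})=\frac{1}{q_{nj}}\sum_{t=1}^{\Floor{nz}}\bigl(\mathbb{I}\{G_{j}(\eta_{t})\leq x_{j}\}-F_{j}(x_{j})\bigr)
\end{equation*}
and inserting the Hermite expansion derived above, I would split off the leading rank $m_{j}$ from the higher-order chaos, $\mathbb{F}_{\Floor{nz},j}=\mathbb{L}_{\Floor{nz},j}+\mathbb{R}_{\Floor{nz},j}$, where
\begin{equation*}
\mathbb{L}_{\Floor{nz},j}(x_{j})=\sum_{\sum_{i=1}^{p}r_{i}=m_{j}}\frac{T_{j(r_{1}, \cdots, r_{p})}(x_{j})}{\prod_{i=1}^{p}r_{i}!}\,\frac{1}{q_{nj}}\sum_{t=1}^{\Floor{nz}}\prod_{i=1}^{p}H_{r_{i}}(\eta_{t,i}),
\end{equation*}
and $\mathbb{R}_{\Floor{nz},j}$ collects all terms with $\sum_{i}r_{i}>m_{j}$. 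The three things to establish are then: (i) convergence of the leading part $\mathbb{L}_{\Floor{nz},j}$ to the stated limit; (ii) uniform negligibility of the remainder $\mathbb{R}_{\Floor{nz},j}$; and (iii) tightness in $D([-\infty,\infty]\times[0,1])$ under the sup-norm.

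For (i), the decisive structural feature is that $\mathbb{L}_{\Floor{nz},j}$ is a finite linear combination of deterministic coefficient functions $T_{j(r_{1},\cdots,r_{p})}(x_{j})$---bounded and, since $F_{j}$ is continuous, continuous in $x_{j}$ on the compactified line---multiplying the normalized Hermite partial sums $q_{nj}^{-1}\sum_{t=1}^{\Floor{nz}}\prod_{i}H_{r_{i}}(\eta_{t,i})$. By the variance computation for $\sigma_{r_{1}\cdots r_{p}}^{2}(n)$ together with the constants (\ref{3.2}), the factor $q_{nj}$ of (\ref{3.1}) is exactly the standard-deviation order of each such partial sum, and the hypothesis $0<\sum_{i}r_{i}(1-2d)<1$ places the leading rank in the non-central regime. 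The multivariate non-central limit theorem of \cite{Dobrushin1979}, \cite{Taqqu1975} and \cite{Arcones1994}, applied to the $p$ mutually independent long-memory sequences $\eta_{\cdot,1},\cdots,\eta_{\cdot,p}$, then gives
\begin{equation*}
\frac{1}{q_{nj}}\sum_{t=1}^{\Floor{nz}}\prod_{i=1}^{p}H_{r_{i}}(\eta_{t,i})\rightsquigarrow \mathbb{Z}_{j(r_{1},\cdots,r_{p})}(z),
\end{equation*}
with limit the multiple Wiener--It\^{o} integral (\ref{3.4}); the product of the \emph{independent} white noises $W_{1},\cdots,W_{p}$ is precisely the trace of the independence of the underlying sequences. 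Joint convergence over the finitely many multi-indices with $\sum_{i}r_{i}=m_{j}$ and over finitely many time points $z$ follows by the Cram\'{e}r--Wold device, since any linear combination is again a normalized partial sum of a single rank-$m_{j}$ functional of the Gaussian vector, to which the same theorem applies with kernel equal to the corresponding linear combination of kernels.

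Step (ii) is the main obstacle. Pointwise negligibility is immediate: by orthogonality of the Wiener chaoses, $\mathbf{E}[\mathbb{R}_{\Floor{nz},j}(x_{j})^{2}]$ equals $q_{nj}^{-2}$ times a sum, over ranks $m>m_{j}$, of partial-sum variances of order $n^{2-m(1-2d)}$ in the long-range regime $m(1-2d)<1$ and of order $n$ once $m(1-2d)>1$; since $m_{j}(1-2d)<1$ both are $o(q_{nj}^{2})=o(n^{2-m_{j}(1-2d)})$, whence $\mathbf{E}[\mathbb{R}_{\Floor{nz},j}(x_{j})^{2}]\to 0$ for each fixed $(x_{j},z)$. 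Upgrading this to $\sup_{x_{j},z}|\mathbb{R}_{\Floor{nz},j}(x_{j})|\xrightarrow{P}0$ is where the bulk of the work lies. Following the discretization argument of \cite{Dehling1989}, I would partition $[-\infty,\infty]$ into $N=N(n)$ cells carrying equal $F_{j}$-mass; on each cell the oscillation of the empirical difference $F_{\Floor{nz},j}-F_{j}$ is dominated, by monotonicity of both the empirical and the true marginal, by the cell mass plus the values of the remainder at the adjacent nodes, while the oscillation of $\mathbb{L}_{\Floor{nz},j}$ is controlled through the modulus of continuity of the $T_{j(r_{1},\cdots,r_{p})}$. The maximum of the remainder over the $N$ nodes, and over a grid in $z$, is then bounded by a maximal inequality fed by the pointwise variance estimate, with $N(n)$ tuned so that both the nodal maximum and the within-cell oscillation vanish; the requisite higher-moment bounds on the chaos components come from hypercontractivity on Wiener chaos.

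For (iii), tightness is inherited from the factorized structure of the leading part. The finite family of Hermite partial-sum processes $z\mapsto q_{nj}^{-1}\sum_{t=1}^{\Floor{nz}}\prod_{i}H_{r_{i}}(\eta_{t,i})$ is tight in $D[0,1]$: the increment over $[z',z]$ has variance of order $|z-z'|^{2-m_{j}(1-2d)}$, and hypercontractivity promotes this to an $L^{2k}$-bound of order $|z-z'|^{k(2-m_{j}(1-2d))}$; since $m_{j}(1-2d)<1$ the exponent exceeds one for every $k\geq 1$, so the Kolmogorov--Chentsov criterion applies and the limiting Hermite process has continuous paths. Because the $x_{j}$-dependence of $\mathbb{L}_{\Floor{nz},j}$ enters only through the uniformly continuous deterministic functions $T_{j(r_{1},\cdots,r_{p})}$, joint tightness in $(x_{j},z)$ reduces to tightness of this finite family together with the uniform continuity of the coefficients. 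Combining (i)--(iii) with the uniform negligibility from (ii), a Slutsky-type argument yields weak convergence of $\mathbb{F}_{\Floor{nz},j}$ to $\sum_{\sum_{i}r_{i}=m_{j}}\bigl(T_{j(r_{1},\cdots,r_{p})}(x_{j})/\prod_{i}r_{i}!\bigr)\,\mathbb{Z}_{j(r_{1},\cdots,r_{p})}(z)$ in $D([-\infty,\infty]\times[0,1])$ under the sup-norm, as claimed.
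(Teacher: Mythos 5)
Your proposal is correct and follows essentially the same route as the paper: a Hermite expansion of the marginal indicator class, the Dehling--Taqqu reduction principle to dispose of the higher-order chaos uniformly, and the non-central limit theorem for the leading rank-$m_{j}$ Hermite partial sums. The paper's own proof is only a sketch that invokes the results of \cite{Dehling1989} marginal by marginal, so your three-step outline (leading term, uniform negligibility of the remainder, tightness) is simply a more explicit rendering of the same argument.
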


\begin{proof} 
(sketch). The proof is strongly associated to the weak convergence results in \cite{Dehling1989}, here we 
extend the univariate case to the multivariate cases. 
The class of functions $F_{\Floor{nz},j}(x_{j})-F_{j}(x_{j})$ can be expressed as a multivariate expansions in Hermite polynomials.

The first class of function $\mathbb{I}\{G_{1}(\eta_{t, 1})\leq x_{1}\}-F_{1}(x_{1})$ can be expanded as one dimensional
Hermite polynomials in $L^{2}$: 
\begin{align*}
\mathbb{I}\{G_{1}(\eta_{t, 1})\leq x_{1}\}-F_{1}(x_{1})&=\sum_{r_{1}=m_{1}}^{\infty}\frac{T_{r_{1}}(x_{1})}{r_{1}!}
H_{r_{1}}(\eta_{t, 1})
\end{align*}
with the Hermite rank $m_{1}=\min\{r_{1}(x_{1}): T_{r_{1}}(x_{1})\neq 0\}$, where $T_{r_{1}}(x_{1})=\mathbf{E}[\mathbb{I}
\{G_{1}(\eta_{t, 1})\leq x_{1}\}H_{r_{1}}(\eta_{t,1})$. Moreover, the results of \cite{Dehling1989} show that
\begin{equation}\label{3.5}
\frac{1}{q_{n1}}\sum_{t=1}^{\Floor{nz}}H_{r_{1}}(\eta_{t, 1})\rightsquigarrow \mathbb{Z}_{r_{1}}(z)=
\frac{1}{\sqrt{c_{1}(r, d)}}\int_{\mathbb{R}^{r_{1}}}^{'}\frac{e^{iz(\lambda_{1}+\cdots+\lambda_{r_{1}})}}
{i(\lambda_{1}+\cdots+\lambda_{r_{1}})}\prod_{j_{1}=1}^{r_{1}}|\lambda_{j_{1}}|^{d-1}\prod_{j_{1}=1}^{r_{1}}W_{1}
(\mathrm{d}\lambda_{j_{1}})
\end{equation}
in the space $D([0,1])$ equipped with supremum norm.

To the second marginal empirical process, the class of function $\mathbb{I}\{G_{2}(\eta_{t,1}, \eta_{t, 2})\leq x_{2}\}-F_{2}(x_{2})$
is square integrable with respect to the standard Gaussian density such that
we can expand the class of function $\mathbb{I}\{G_{2}(\eta_{t,1}, \eta_{t, 2})\leq x_{2}\}-F_{2}(x_{2})$ with bivariate Hermite
polynomials in $L^{2}$:
\begin{align*}
\mathbb{I}\{G_{2}(\eta_{t,1}, \eta_{t, 2})\leq x_{2}\}-F_{2}(x_{2})&=\sum_{m_{2}=1}^{\infty}
\sum_{r_{1}+r_{2}=m_{2}}^{\infty}\frac{T_{r_{1}, r_{2}}(x_{2})}{r_{1}!r_{2}!}H_{r_{1}}(\eta_{t, 1})H_{r_{2}}(\eta_{t, 2})
\end{align*}
with the Hermite rank $m_{2}=m_{2}(x_{2})=\min\{r_{1}+r_{2}=m_{2}(x_{2}): T_{r_{1}, r_{2}}(x_{2})\neq 0\}$, where 
$T_{r_{1}, r_{2}}(x_{2})=\mathbf{E}[\mathbb{I}\{G_{1}(\eta_{t, 1}, \eta_{t, 2})\leq x_{2}\}H_{r_{1}}(\eta_{t,1})H_{r_{2}}(\eta_{t, 2})$. 
Similarly, by the results of \cite{Dehling1989}, we have
\begin{equation*}
\frac{1}{q_{n2}}\sum_{t=1}^{\Floor{nz}}H_{r_{1}}(\eta_{t, 1})H_{r_{2}}(\eta_{t, 2})\rightsquigarrow \mathbb{Z}_{r_{1}, r_{2}}(z),
\end{equation*}
in the space $D([0,1])$ equipped with supremum norm, where 
\begin{equation}\label{3.6}
\mathbb{Z}_{r_{1},r_{2}}(z)=\frac{1}{\sqrt{c_{2}(r, d)}}\int_{\mathbb{R}^{m_{2}}}^{'}\frac{e^{iz(\lambda_{1}+\cdots+\lambda_{m_{2}})}}
{i(\lambda_{1}+\cdots+\lambda_{m_{2}})}\prod_{j_{1}=1}^{m_{2}}|\lambda_{j_{1}}|^{d-1}\prod_{j_{1}=1}^{r_{1}}W_{1}(\mathrm{d}\lambda_{j_{1}})
\prod_{j_{2}=1}^{r_{2}}W_{2}(\mathrm{d}\lambda_{j_{2}}).
\end{equation}
see also Proposition 2 in \cite{Taufer2015}.

Consequently, for the $p$th marginal empirical process, the function $\mathbb{I}\{G_{p}(\eta_{t,1},\cdots,\eta_{t, p})\leq x_{p}\}
-F_{2}(x_{p})$ is $p$th integrable w.r.t. the standard Gaussian densities $(\phi(x_{1}, \cdots, \phi(x_{p})$
and let $L^{2}=L^{2}(\mathbb{R}, \phi(x_{1})\cdots\phi(x_{p})
\mathrm{d}x_{1}\cdots\mathrm{d}x_{p})$ be the Hilbert space of real measurable functions $G^{2}(\mathbf{x})$. 
Then there exists an expansion in $L^{2}$ for any $x_{p}\in\mathbb{R}$: 
\begin{align*}
\mathbb{I}\{G_{p}(\eta_{t,1},\cdots,\eta_{t, p})\leq x_{p}\}-F_{p}(x_{p})&=\sum_{m_{p}=1}^{\infty}\sum_{r_{1}+\cdots+r_{p}=
m_{p}}^{\infty}\frac{T_{r_{1},\cdots, r_{p}}(x_{p})}
{r_{1}!\cdots r_{p}!}H_{r_{1}}(\eta_{t, 1})\cdots H_{r_{p}}(\eta_{t, p})
\end{align*}
with the Hermite rank $m_{p}=m_{p}(x_{p})=\min\{r_{1}+\cdots+r_{p}=m_{p}: T_{m_{p}}(x_{p})\neq 0\}$, where 
$T_{r_{1}, \cdots, r_{p}}(x_{p})=\mathbf{E}[\mathbb{I}\{G_{p}(\eta_{t, 1},\cdots \eta_{t, p})\leq x_{p}\}H_{r_{1}}(\eta_{t,1})\cdots
H_{r_{p}}(\eta_{t, p})$. Similarly, as a consequence of theorem 1 in \cite{Dehling1989}, we have
\begin{equation*}
\frac{1}{q_{np}}\sum_{t=1}^{\Floor{nz}}H_{r_{1}}(\eta_{t, 1})\cdots H_{r_{p}}(\eta_{t, p})\rightsquigarrow 
\mathbb{Z}_{r_{1}, \cdots, r_{p}}(z),
\end{equation*}
in the space $D([0,1])$ equipped with supremum norm, where
\begin{align}\label{3.7}
\mathbb{Z}_{r_{1},\cdots, r_{p}}(z)&=\frac{1}{\sqrt{c_{p}(r, d)}}\int_{\mathbb{R}^{m_{p}}}^{'}\frac{e^{iz(\lambda_{1}+\cdots+\lambda_{m_{p}})}}
{i(\lambda_{1}+\cdots+\lambda_{m_{p}})}\prod_{j_{1}=1}^{m_{p}}|\lambda_{j_{1}}|^{d-1}\nonumber\\
&\quad\times\prod_{j_{1}=1}^{r_{1}}W_{1}(\mathrm{d}\lambda_{j_{1}})
\cdots\prod_{j_{p}=1}^{r_{p}}W_{2}(\mathrm{d}\lambda_{j_{p}}).
\end{align}
Therefore, combining (\ref{3.5}), (\ref{3.6}) and (\ref{3.7}), we can conclude that the sequential marginal empirical processes (\ref{3.3})
weakly converges to the Hermite process as in (\ref{3.4}).
\end{proof}

\begin{remark}
The Hermite processes $\mathbb{Z}_{j(r_{1}, \cdots, r_{p})}(1)$ with Hermite ranks $m_{j}=\sum_{j=1}^{p}r_{j}$ are Gaussian for
$\sum_{j=1}^{p}r_{j}=1$ and the normalizing factors $c_{j}(r,d)$ ensures unit variance of $\mathbb{Z}_{j(r_{1}, 
\cdots, r_{p})}(1)$ and otherwise $\mathbb{Z}_{j(r_{1}, \cdots, r_{p})}(1)$ are non-Gaussian for $m\geq 2$.
\end{remark}

As a consequence of Lemma \ref{Lemma-3.1}, we can now immidiately establish the weak convergence of the each uniform marginal empirical 
processes $\mathbb{B}_{\Floor{nz}, j}(u_{j})$. Since the uniform marginal distributions $F_{1}, \cdots, F_{p}$ are all continuous, 
we have 
\begin{equation*}
\mathbb{B}_{\Floor{nz},j}(u_{j})=\mathbb{F}_{\Floor{nz}, j}(F_{j}^{-1}(u_{j})), \quad \text{for all}~ u_{j},z\in[0, 1].
\end{equation*}
This means that the weak convergence results that we show in Lemma \ref{Lemma-3.1} also holds true simultaneously for all
marginal empirical processes $\mathbb{F}_{\Floor{nz},j}(x_{j})$ and $\mathbb{B}_{\Floor{nz},j}(u_{j})$ if $F_{j}$ are continuous. 

\begin{corollary}\label{Corollary-3.3}
Under Condition \ref{Condition-2.1} and (\ref{2.2}), the class of functions $\mathbb{I}\{U_{t, j}\leq u_{j}\}-u_{j}
=\mathbb{I}\{F_{j}(G_{j}(\eta_{t}))\leq u_{j}\}-u_{j}$ have Hermite ranks $m_{j}:=\min\{m_{j}(x_{j}):
J_{j(r_{1}, \cdots, r_{p})}(u_{j})\neq 0~\text{for some}~ u_{j}\in\mathbb{R}\}$ with $0<m_{j}(1-2d)<1$.
Then we have,
\begin{equation}\label{3.8}
\mathbb{B}_{\Floor{nz},j}(u_{j})=\frac{\Floor{nz}}{q_{nj}}(D_{\Floor{nz},j}(u_{j})-u_{j})\rightsquigarrow 
\sum_{\sum_{j=1}^{p}r_{j}=m_{j}}\frac{J_{j(r_{1}, \cdots,r_{p})}(u_{j})}{\prod_{j=1}^{p}r_{j}!}
\mathbb{Z}_{j(r_{1}, \cdots,r_{p})}(z).
\end{equation}
for all components $u_{j}\in [0, 1],~j=1, \cdots, p$ and $z\in[0,1]$ in $D([0, 1]^{2})$ of uniformly bounded functions on $[0, 1]^{2}$ 
with supremum norm $\|\cdot\|_{\infty}$. 
\end{corollary}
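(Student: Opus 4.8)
The plan is to obtain (\ref{3.8}) as a direct consequence of Lemma \ref{Lemma-3.1} by composing with the fixed marginal quantile function $F_{j}^{-1}$ and invoking the continuous mapping theorem; no new probabilistic input is needed beyond Lemma \ref{Lemma-3.1} and continuity of the marginals. First I would start from the identity already recorded immediately before the corollary, $\mathbb{B}_{\Floor{nz},j}(u_{j})=\mathbb{F}_{\Floor{nz},j}(F_{j}^{-1}(u_{j}))$, which holds for every $u_{j},z\in[0,1]$ precisely because $F_{j}$ is continuous: one has $F_{j}(F_{j}^{-1}(u_{j}))=u_{j}$, and since the flat portions of $F_{j}$ carry no mass the events $\{U_{t,j}\leq u_{j}\}$ and $\{X_{t,j}\leq F_{j}^{-1}(u_{j})\}$ coincide (almost surely), so that $D_{\Floor{nz},j}(u_{j})=F_{\Floor{nz},j}(F_{j}^{-1}(u_{j}))$. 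At the endpoints the convention $F_{j}^{-1}(0)=-\infty$ makes both sides vanish.

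Next I would transport the weak convergence of Lemma \ref{Lemma-3.1} through the composition map $\Psi:D([-\infty,\infty]\times[0,1])\to D([0,1]^{2})$, $\Psi(g)(u_{j},z)=g(F_{j}^{-1}(u_{j}),z)$. Because $F_{j}^{-1}$ is a fixed deterministic nondecreasing function, $\Psi$ is nonexpansive for the supremum norm, $\|\Psi(g_{1})-\Psi(g_{2})\|_{\infty}\leq\|g_{1}-g_{2}\|_{\infty}$, hence continuous; the continuous mapping theorem then yields
\[
\mathbb{B}_{\Floor{nz},j}(u_{j})=\Psi(\mathbb{F}_{\Floor{nz},j})(u_{j},z)\rightsquigarrow\sum_{\sum_{j=1}^{p}r_{j}=m_{j}}\frac{T_{j(r_{1},\cdots,r_{p})}(F_{j}^{-1}(u_{j}))}{\prod_{j=1}^{p}r_{j}!}\,\mathbb{Z}_{j(r_{1},\cdots,r_{p})}(z)
\]
in $D([0,1]^{2})$ equipped with the sup-norm. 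Finally, the coefficient identity $T_{j(r_{1},\cdots,r_{p})}(F_{j}^{-1}(u_{j}))=J_{j(r_{1},\cdots,r_{p})}(u_{j})$ recorded just before Lemma \ref{Lemma-3.1} both converts the limit into the form (\ref{3.8}) and shows that the Hermite rank of $\mathbb{I}\{U_{t,j}\leq u_{j}\}-u_{j}$ agrees with the rank $m_{j}$ of $\mathbb{I}\{X_{t,j}\leq x_{j}\}-F_{j}(x_{j})$, as claimed in the statement.

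The only delicate point — the main obstacle — is ensuring that $\Psi$ genuinely maps into $D([0,1]^{2})$ and that the continuous mapping theorem is applicable in the sup-norm $D$-space (including the usual outer-measure and measurability caveats, and the behaviour at $u_{j}\in\{0,1\}$ where $F_{j}^{-1}$ is infinite). Since Lemma \ref{Lemma-3.1} already provides convergence on the extended strip $[-\infty,\infty]\times[0,1]$, the endpoints are absorbed harmlessly, and the verification reduces to the routine observation that reparametrizing the first coordinate by a fixed monotone function preserves the relevant sample-path regularity and sup-norm distances.
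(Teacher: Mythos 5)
Your proposal is correct and follows essentially the same route as the paper: the paper's own justification (given in the paragraph immediately preceding the corollary) is precisely the identity $\mathbb{B}_{\Floor{nz},j}(u_{j})=\mathbb{F}_{\Floor{nz},j}(F_{j}^{-1}(u_{j}))$ for continuous $F_{j}$, combined with Lemma \ref{Lemma-3.1} and the coefficient relation $T_{j(r_{1},\cdots,r_{p})}(F_{j}^{-1}(u_{j}))=J_{j(r_{1},\cdots,r_{p})}(u_{j})$. You merely make explicit the continuous-mapping and sup-norm details that the paper leaves implicit.
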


In order to derive the weak convergence of the sequential empirical copula processes $\mathbb{C}_{\Floor{nz}}$, we first need to study a 
strong approximation of the quantile empirical process $\mathbb{Q}_{\Floor{nz},j}(u_{j})=nq_{nj}^{-1}(D_{\Floor{nz},j}^{-1}(u_{j})-u_{j})$ by 
$\mathbb{B}_{\Floor{nz},j}(u_{j})=nq_{nj}^{-1}(D_{\Floor{nz},j}(u_{j})-u_{j})$ as $n\rightarrow \infty$. 

\begin{proposition}\label{Proposition-3.4}
Suppose that the Condition \ref{Condition-2.1} is satisfied and the subordinated processes $X_{t}$ as in (\ref{2.2}).
Let $J_{j(r_{1}, \cdots,r_{p})}(u_{j})$ and the derivatives $J_{j(r_{1}, \cdots,r_{p})}^{'}(u_{j})$ 
with the Hermite ranks $m_{j}$ are uniformly bounded and $\sup_{u_{j}\in [0, \delta_{n}]}|J_{j(r_{1}, \cdots,r_{p})}(u_{j})|=O(\delta_{n})$ 
for the sequence $\delta_{n}\rightarrow 0$ as $n\rightarrow\infty$. Then we have
\begin{equation}\label{3.9}
\sup_{u_{j}\in[0,1]; z\in[0,1]}\left|\mathbb{Q}_{\Floor{nz},j}(u_{j})-\mathbb{B}_{\Floor{nz},j}(u_{j})\right|
\stackrel{p}\rightarrow 0, \quad \text{as}~ 
n\rightarrow \infty.
\end{equation}
\end{proposition}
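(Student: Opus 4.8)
The plan is to prove a Bahadur--Kiefer type representation linking the quantile process to the empirical process, and then to exploit the degeneracy of the long-range dependent limit, i.e. the uniform reduction principle of Lemma~\ref{Lemma-3.1} and Corollary~\ref{Corollary-3.3}, to show that the remainder is negligible uniformly in $(u_{j},z)$. First I would record the elementary two-sided bound for the empirical quantile function: by the definition of $D_{\Floor{nz},j}^{-1}$ one has $u_{j}\leq D_{\Floor{nz},j}(D_{\Floor{nz},j}^{-1}(u_{j}))\leq u_{j}+\Floor{nz}^{-1}$, whence $|D_{\Floor{nz},j}(D_{\Floor{nz},j}^{-1}(u_{j}))-u_{j}|\leq \Floor{nz}^{-1}$ uniformly in $(u_{j},z)$. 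Inserting $x=D_{\Floor{nz},j}^{-1}(u_{j})$ into the definition of $\mathbb{B}_{\Floor{nz},j}$ and splitting the increment gives the fundamental identity
\begin{equation*}
\mathbb{B}_{\Floor{nz},j}\bigl(D_{\Floor{nz},j}^{-1}(u_{j})\bigr)+\mathbb{Q}_{\Floor{nz},j}(u_{j})
=\frac{\Floor{nz}}{q_{nj}}\bigl(D_{\Floor{nz},j}(D_{\Floor{nz},j}^{-1}(u_{j}))-u_{j}\bigr)=O\bigl(q_{nj}^{-1}\bigr),
\end{equation*}
uniformly in $(u_{j},z)$, since $q_{nj}^{-1}\to 0$. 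Hence (\ref{3.9}) reduces to showing that $\mathbb{B}_{\Floor{nz},j}$ evaluated at the random argument $D_{\Floor{nz},j}^{-1}(u_{j})$ may be replaced by its value at $u_{j}$, up to $o_{p}(1)$.

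For this replacement I would proceed in two steps. By Corollary~\ref{Corollary-3.3} one has $\sup_{u_{j},z}|\mathbb{B}_{\Floor{nz},j}(u_{j})|=O_{p}(1)$, and since $q_{nj}=o(n)$ for $d\in(0,1/2)$, the inversion inequality for distribution functions yields the uniform consistency $\sup_{u_{j},z}|D_{\Floor{nz},j}^{-1}(u_{j})-u_{j}|\stackrel{p}{\rightarrow}0$. Next, the reduction principle of Corollary~\ref{Corollary-3.3} gives, uniformly in its spatial argument, $\mathbb{B}_{\Floor{nz},j}(\cdot)=\sum_{\sum r_{j}=m_{j}}(\prod r_{j}!)^{-1}J_{j(r_{1},\ldots,r_{p})}(\cdot)\,\mathbb{Z}_{j(r_{1},\ldots,r_{p})}(z)+o_{p}(1)$, so that the spatial oscillation of $\mathbb{B}_{\Floor{nz},j}$ is governed, to leading order, by the smooth deterministic coefficients $J_{j(r_{1},\ldots,r_{p})}$. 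Because these coefficients and their derivatives are uniformly bounded by hypothesis, a first-order expansion combined with the consistency bound gives, on the interior,
\begin{equation*}
\bigl|\mathbb{B}_{\Floor{nz},j}(D_{\Floor{nz},j}^{-1}(u_{j}))-\mathbb{B}_{\Floor{nz},j}(u_{j})\bigr|
\leq\!\!\sum_{\sum r_{j}=m_{j}}\frac{\|J_{j(r_{1},\ldots,r_{p})}^{'}\|_{\infty}}{\prod r_{j}!}\,\bigl|D_{\Floor{nz},j}^{-1}(u_{j})-u_{j}\bigr|\,\bigl|\mathbb{Z}_{j(r_{1},\ldots,r_{p})}(z)\bigr|+o_{p}(1),
\end{equation*}
and the right-hand side is $o_{p}(1)$ uniformly, because the increment $|D_{\Floor{nz},j}^{-1}(u_{j})-u_{j}|$ vanishes in probability while the Hermite processes have almost surely bounded paths on $[0,1]$.

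Combining the two displays establishes (\ref{3.9}). The main obstacle is the boundary layer $u_{j}\downarrow 0$ (and symmetrically $u_{j}\uparrow 1$), where the quantile function is least stable and the first-order expansion above fails, since there the increment $D_{\Floor{nz},j}^{-1}(u_{j})-u_{j}$ need not be small relative to $u_{j}$ and the mean-value point may leave $(0,1)$. This is precisely what the regularity condition $\sup_{u_{j}\in[0,\delta_{n}]}|J_{j(r_{1},\ldots,r_{p})}(u_{j})|=O(\delta_{n})$ is meant to control: on a shrinking layer $[0,\delta_{n}]$ I would bound $\mathbb{B}_{\Floor{nz},j}$ and $\mathbb{Q}_{\Floor{nz},j}$ separately rather than through their increment, using that the leading term of $\mathbb{B}_{\Floor{nz},j}$ is itself $O(\delta_{n})$ there, so that both processes are uniformly small on the layer; on the complement $[\delta_{n},1-\delta_{n}]$ the smooth argument of the preceding paragraph applies. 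Choosing $\delta_{n}\to 0$ slowly enough to dominate the consistency rate $q_{nj}/\Floor{nz}$ then closes the estimate uniformly in $(u_{j},z)$, the remaining care near $z\to 0$, where $\Floor{nz}$ is small, being absorbed by the already-established weak convergence of $\mathbb{B}_{\Floor{nz},j}$ on $D([0,1]^{2})$.
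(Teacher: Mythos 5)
Your proposal is correct and follows essentially the same route as the paper: the same Bahadur--Kiefer identity via $D_{\Floor{nz},j}(D_{\Floor{nz},j}^{-1}(u_{j}))-u_{j}=O(\Floor{nz}^{-1})$, followed by replacing $\mathbb{B}_{\Floor{nz},j}$ at the random argument by its value at $u_{j}$ using the Hermite reduction, the mean value theorem on the coefficients $J_{j(r_{1},\ldots,r_{p})}$, and the uniform consistency of the empirical quantile. The only cosmetic differences are that the paper controls the normalized partial sums $\sum_{t\le\Floor{nz}}H_{r}(\eta_{t})$ via a law-of-the-iterated-logarithm bound where you invoke almost-sure boundedness of the limiting Hermite process paths, and that you make explicit the boundary-layer argument that the hypothesis $\sup_{u_{j}\in[0,\delta_{n}]}|J_{j(r_{1},\ldots,r_{p})}(u_{j})|=O(\delta_{n})$ is meant to serve, which the paper leaves implicit.
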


\begin{proof}
For each $j\in\{1, \cdots, p\}$, we have
\begin{align*}
\mathbb{Q}_{\Floor{nz},j}(u_{j})&=\frac{\Floor{nz}}{q_{nj}}\left(D_{\Floor{nz},j}^{-1}(u_{j})-u_{j}\right)\\
&=\frac{\Floor{nz}}{q_{nj}}\left(D_{\Floor{nz},j}(D_{\Floor{nz},j}^{-1}(u_{j}))-u_{j}\right)-\frac{\Floor{nz}}{q_{nj}}
\left(D_{\Floor{nz},j}(D_{\Floor{nz},j}^{-1}(u_{j}))-D_{\Floor{nz},j}^{-1}(u_{j})\right)\\
&=\frac{\Floor{nz}}{q_{nj}}\left(D_{\Floor{nz},j}(D_{\Floor{nz},j}^{-1}(u_{j}))-u_{j}\right)-\mathbb{B}_{\Floor{nz},j}
(D_{\Floor{nz},j}^{-1}(u_{j})),
\end{align*}
and 
\begin{equation*}
0\leq \sup_{u_{j}\in[0, 1]}\left|D_{\Floor{nz},j}(D_{\Floor{nz},j}^{-1}(u_{j}))-u_{j}\right|\leq \frac{1}{\Floor{nz}},
\end{equation*}
\allowdisplaybreaks
Thus, we have
\begin{align*}
&\sup_{u_{j}\in[0, 1], z\in[0,1]}|\mathbb{Q}_{\Floor{nz},j}(u_{j})-\mathbb{B}_{\Floor{nz},j}(u_{j})|\\
&\quad=\sup_{u_{j}\in[0, 1],z\in[0,1]}|\mathbb{B}_{\Floor{nz},j}(D_{\Floor{nz},j}^{-1}(u_{j}))-\mathbb{B}_{\Floor{nz},j}(u_{j})|
+O\left(\frac{1}{q_{nj}}\right)\\
&\quad\leq\sup_{u_{j}\in[0, 1],z\in[0,1]}\frac{1}{q_{nj}}\left|\frac{J_{j(r_{1},\cdots, r_{p})}(D_{\Floor{nz},j}^{-1}(u_{j}))}
{\prod_{j=1}^{p}r_{j}!}\sum_{t=1}^{n}H_{r_{1},\cdots, r_{p}}(\eta_{t})\right.\\
&\qquad\left.-\frac{J_{j(r_{1},\cdots, r_{p})}(u_{j})}{\prod_{j=1}^{p}r_{j}!}
\sum_{t=1}^{\Floor{nz}}H_{r_{1},\cdots, r_{p}}(\eta_{t})\right|\\
&\quad\leq \frac{1}{q_{nj}\prod_{j=1}^{p}r_{j}!}\sup_{z\in[0,1]}\left|\sum_{t=1}^{\Floor{nz}}H_{r_{1},\cdots, r_{p}}
(\eta_{t})\right|\sup_{u_{j}\in[0, 1]}
\left|J_{j(r_{1},\cdots, r_{p})}(D_{\Floor{nz},j}^{-1}(u_{j}))-J_{j(r_{1},\cdots, r_{p})}(u_{j})\right|.
\end{align*}
Note that $H_{r_{1},\cdots, r_{p}}(\eta_{t})=\prod_{j=1}^{p}H_{r_{j}}(\eta_{t,j})$. Using the results of \cite{Mori1987}, 
we have
\begin{equation}\label{3.10}
\limsup_{n\rightarrow}n^{m_{j}(d-1/2)}\left(\prod_{j=1}^{p}L_{\eta, j}(n)\log\log n\right)^{-\frac{m_{j}}{2}}
\sup_{z\in[0,1]}\left|\sum_{t=1}^{\Floor{nz}}H_{r}(\eta_{t})\right|=\frac{2^{(m_{j}+1)/2}}{\sqrt{c_{j}(r, d)}}
\end{equation}
almost surely. Then, by the mean value theorem, we derive
\begin{align*}
&\sup_{u_{j}\in[0, 1]}\left|J_{j(r_{1},\cdots, r_{p})}(D_{\Floor{nz},j}^{-1}(u_{j}))-J_{j(r_{1},\cdots, r_{p})}(u_{j})\right|\\
&\quad=\sup_{u_{j}\in[0, 1]}|D_{\Floor{nz},j}^{-1}(u_{j})-u_{j})||J_{j(r_{1},\cdots, r_{p})}^{'}(\widetilde{D}_{\Floor{nz},j}(u_{j}))|
\end{align*}
where $|\widetilde{D}_{\Floor{nz},j}^{-1}(u_{j})-u_{j})|\leq |D_{\Floor{nz},j}^{-1}(u_{j})-u_{j})|$ and 
$|J_{j(r_{1},\cdots, r_{p})}^{'}(\widetilde{D}_{\Floor{nz},j}(u_{j}))|$ is uniformly bounded.
Then, from the proof (\ref{3.10}) of Proposition 2.2 in \cite{Csrg2006}, we obtain
\begin{align*}
\sup_{u_{j}\in[0, 1]}\left|D_{\Floor{nz},j}^{-1}(u_{j})-u_{j})\right|=\sup_{u_{j}\in[0, 1]}\frac{q_{nj}}{\Floor{nz}}\left|
\mathbb{Q}_{\Floor{nz},j}(u_{j})\right|\\
=O\left(\left(n^{2d-1}\prod_{j=1}^{p}L_{\eta, j}(n)\log\log n\right)^{\frac{m_{j}}{2}}\right)\rightarrow 0.
\end{align*}
as $n\rightarrow \infty$. This yields 
\begin{align}\label{3.11}
&\sup_{u_{j}\in[0, 1]}\left|J_{j(r_{1},\cdots, r_{p})}(D_{\Floor{nz},j}^{-1}(u_{j}))-J_{j(r_{1},\cdots, r_{p})}(u_{j})\right|\nonumber\\
&\quad=O\left(\left(n^{2d-1}\prod_{j=1}^{p}L_{\eta, j}(n)\log\log n\right)^{\frac{m_{j}}{2}}\right)\rightarrow 0.
\end{align}
as $n\rightarrow \infty$. Therefore, by combining (\ref{3.10}) and (\ref{3.11}), we conclude that
\begin{equation*}
\sup_{u_{j}\in[0, 1]}\left|\mathbb{Q}_{\Floor{nz},j}(u_{j})-\mathbb{B}_{\Floor{nz},j}(u_{j})\right|\stackrel{p}\rightarrow 0,
\end{equation*}
as $n\rightarrow \infty$, which complete the proof of proposition.
\end{proof}

\section{Weak convergence of the sequential empirical copula processes}

\subsection{Weak convergence of the processes in (\ref{2.5})}

\hspace{0.5cm}
In this subsection, we study the asymptotic behavior of the sequential empirical copula processes $D_{\Floor{nz}}$.
In the case of long-range dependent sequences, the limiting distributions of the $D_{\Floor{nz}}$ 
are usually dependent on the Hermite rank $r$ of the class of functions $\mathbb{I}\{\mathbf{U}_{t}\leq\mathbf{u}\}-C(\mathbf{u})$,
and determined by functions $G_{1},\cdots, G_{p}$. Thus, we show first the asymptotics of $D_{\Floor{nz}}$ under lower Hermite ranks.

\begin{lemma}\label{Lemma-4.1}
If the Condition \ref{Condition-2.1} and (\ref{2.2}) holds true, the class of functions
$\mathbb{I}\{\mathbf{U}_{t}\leq\mathbf{u}\}-C(\mathbf{u})$ has a Hermite rank $r=1$ and $0<r(1-2d)<1$, then
\begin{equation*}
\mathbb{B}_{\Floor{nz}}(\mathbf{u})=\frac{\Floor{nz}}{q_{n}}\{D_{\Floor{nz}}(\mathbf{u})-C(\mathbf{u})\}\rightsquigarrow 
\mathbb{B}_{C}(\mathbf{u})
\end{equation*}
weakly converges to 
\begin{equation*}
\mathbb{B}_{C}(\mathbf{u})=-\phi(\Phi^{-1}(u_{1}))\mathbb{H}_{1, 0,\cdots, 0}(z)
-\left(\sum_{j=2}^{p}\phi(C_{j|1, \cdots, j-1}(u_{j})\right)\mathbb{H}_{1}^{(j)}(z)
\end{equation*}
with covariance function
\begin{equation}\label{4.1}
\mathrm{Cov}(\mathbb{H}_{1}^{(j)}(z_{1}), \mathbb{H}_{1}^{(j)}(z_{2}))=
\frac{1}{2}\left\{|z_{1}|^{1+2d}+|z_{2}|^{1+2d}-|z_{1}-z_{2}|^{1+2d}\right\},
\end{equation}
where the processes $\mathbb{H}_{1}^{(j)}(z)$ denotes the $j$th independent copies 
of fractional Brownian motions. 
\end{lemma}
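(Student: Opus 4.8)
The plan is to exploit the Hermite expansion (\ref{2.8}) of the class $\mathbb{I}\{\mathbf{U}_{t}\leq\mathbf{u}\}-C(\mathbf{u})$ and to isolate the term of lowest Hermite rank, which by hypothesis equals $r=1$. Writing $e(j)$ for the $j$th unit vector in $\mathbb{N}^{p}$, the rank-one contribution is $\sum_{j=1}^{p}J_{e(j)}(\mathbf{u})H_{1}(\eta_{t,j})=\sum_{j=1}^{p}J_{e(j)}(\mathbf{u})\eta_{t,j}$, so that
\[
\mathbb{B}_{\Floor{nz}}(\mathbf{u})=\frac{1}{q_{n}}\sum_{t=1}^{\Floor{nz}}\bigl[\mathbb{I}\{\mathbf{U}_{t}\leq\mathbf{u}\}-C(\mathbf{u})\bigr]=\sum_{j=1}^{p}J_{e(j)}(\mathbf{u})\,\frac{1}{q_{n}}\sum_{t=1}^{\Floor{nz}}\eta_{t,j}+R_{\Floor{nz}}(\mathbf{u}),
\]
where $R_{\Floor{nz}}$ collects all Hermite ranks $\geq 2$. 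First I would treat the leading sum, and the remainder $R_{\Floor{nz}}$ would be handled at the end.

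For the leading sum I would invoke the rank-one case of the reduction principle already used in Lemma \ref{Lemma-3.1} (Dehling--Taqqu theory): since $0<1-2d<1$ and $q_{n}$ is the rank-one normalizer, $q_{n}^{-1}\sum_{t=1}^{\Floor{nz}}\eta_{t,j}\rightsquigarrow\mathbb{H}_{1}^{(j)}(z)$ in $D([0,1])$, where $\mathbb{H}_{1}^{(j)}$ is a fractional Brownian motion of Hurst index $H=d+1/2$, whence the covariance (\ref{4.1}). Because the Gaussian sequences $(\eta_{t,j})_{t}$, $j=1,\ldots,p$, are mutually independent by Condition \ref{Condition-2.1}, these limits are independent copies, and the vector $\bigl(q_{n}^{-1}\sum_{t\leq\Floor{nz}}\eta_{t,j}\bigr)_{j=1}^{p}$ converges jointly to $(\mathbb{H}_{1}^{(j)})_{j=1}^{p}$. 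Multiplying by the deterministic coefficients $J_{e(j)}(\mathbf{u})$ then produces $\sum_{j}J_{e(j)}(\mathbf{u})\mathbb{H}_{1}^{(j)}(z)$.

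It remains to identify the coefficients $J_{e(j)}(\mathbf{u})=\mathbf{E}[\eta_{t,j}\,\mathbb{I}\{\mathbf{U}_{t}\leq\mathbf{u}\}]$. Here I would use Gaussian integration by parts (Stein's identity) $\mathbf{E}[\eta_{j}g(\eta)]=\mathbf{E}[\partial_{\eta_{j}}g(\eta)]$ together with the explicit representation (\ref{2.2}): for $j=1$, $U_{t,1}=\Phi(\eta_{t,1})$ turns $\{U_{t,1}\leq u_{1}\}$ into $\{\eta_{t,1}\leq\Phi^{-1}(u_{1})\}$, and $\int_{-\infty}^{\Phi^{-1}(u_{1})}x\phi(x)\,\dif x=-\phi(\Phi^{-1}(u_{1}))$; for $j\geq 2$, conditioning on $(\eta_{t,1},\ldots,\eta_{t,j-1})$ and using $U_{t,j}=C_{j|1,\ldots,j-1}^{-1}(\Phi(\eta_{t,j})\mid\cdots)$ reduces the inner expectation to the same one-dimensional Gaussian integral, producing the density factor $-\phi(C_{j|1,\ldots,j-1}(u_{j}))$. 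This is the step that yields the explicit form of $\mathbb{B}_{C}(\mathbf{u})$ asserted in the statement.

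Finally, I would show that the remainder is uniformly negligible and that the convergence holds at the level of processes. For negligibility, the variance asymptotics $\sigma_{r_{1}\cdots r_{p}}^{2}(n)$ of Section 2 show that the $L^{2}$-norm of each rank-$r$ block is of order $q_{n}(r_{1},\ldots,r_{p})/q_{n}$, which tends to $0$ for every $r\geq 2$ because $r(1-2d)$ is strictly increasing in $r$; summing the blocks and appealing to Condition \ref{Condition-4.1} gives $\sup_{\mathbf{u},z}|R_{\Floor{nz}}(\mathbf{u})|\stackrel{p}{\to}0$. Joint functional convergence in $(\mathbf{u},z)$ over $[0,1]^{p}\times[0,1]$ then follows from the multivariate uniform reduction principle, which upgrades the pointwise-in-$\mathbf{u}$ statement to tightness in the sup-norm by controlling increments of the coefficient maps $\mathbf{u}\mapsto J_{e(j)}(\mathbf{u})$. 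I expect the main obstacle to be precisely this \emph{uniformity}: showing that the remainder vanishes and the leading term is tight simultaneously over the whole cube $[0,1]^{p}$, rather than merely pointwise, since the Hermite expansion is defined pointwise in $\mathbf{u}$ and the coefficient functions must be regulated jointly with the passage to the It\^o--Wiener limit.
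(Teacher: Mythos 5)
Your proposal is correct and follows essentially the same route as the paper's proof: isolate the rank-one term of the Hermite expansion, compute the coefficients $J_{e(j)}(\mathbf{u})$ explicitly from the representation (\ref{2.2}) (the same Gaussian integrals $\int_{-\infty}^{\Phi^{-1}(u_{1})}x\phi(x)\,\dif x=-\phi(\Phi^{-1}(u_{1}))$ and its conditional analogues for $j\geq 2$), and invoke the Taqqu/Dehling--Taqqu reduction principle to identify the limit as a combination of independent fractional Brownian motions, the independence coming from that of the driving Gaussian sequences. Your write-up is in fact somewhat more complete than the paper's, which only carries out the bivariate coefficient computation and appeals to induction, whereas you explicitly isolate and bound the rank-$\geq 2$ remainder via the variance asymptotics and flag the uniformity-in-$\mathbf{u}$ issue that the paper leaves implicit.
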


\begin{proof}
For simplicity of proof, we discuss here only with the bivariate case. Let us first compute the Hermite coefficients 
$J_{1, 0}(\mathbf{u})$ and $J_{0,1}(\mathbf{u})$. Let 
\begin{equation*}
G_{1}(\eta_{1})=\eta_{1}\quad\text{and}\quad G_{2}(\eta_{1}, \eta_{2})=C_{2|1}^{-1}(F_{2}(\eta_{2})|\eta_{1}))
\end{equation*}
in $L^{2}(\mathbb{R}^{2}, \phi(\eta_{1})\phi(\eta_{2}))$, where $C_{2|1}^{-1}(F_{2}(\eta_{2})|\eta_{1})$ is monotonically
nondecreasing for given $\eta_{1}$ and strictly increasing on the set of Lebesgue measure and denote as $C_{2|1}^{-1}$.
By using the formula (\ref{2.9}), we can calculate the Hermite coefficient
\allowdisplaybreaks
\begin{align*}
J_{1, 0}(u_{1}, u_{2})&=\mathbf{E}[\mathbb{I}(U_{t,1}\leq u_{1}, U_{t,2}\leq u_{2})-C(u_{1}, u_{2})]H_{1}(\eta_{1})\\
&=\int_{\mathbb{R}^{2}}\mathbb{I}(F_{1}(G_{1}(\eta_{1}))\leq u_{1}, F_{2}(G_{2}(\eta_{2},\eta_{1}))\leq u_{2})\eta_{1}
\phi(\eta_{1})\phi(\eta_{2})\mathrm{d}\eta_{1}\mathrm{d}\eta_{2}\\
&=\int_{\mathbb{R}^{2}}\mathbb{I}(\eta_{1}\leq \Phi^{-1}(u_{1}))\mathbb{I}(C_{2|1}^{-1}(F_{2}(\eta_{2})|\eta_{1})
\leq u_{2})\eta_{1}\phi(\eta_{1})\phi(\eta_{2})\mathrm{d}\eta_{1}\mathrm{d}\eta_{2}\\
&=\int_{\mathbb{R}}\mathbb{I}(\eta_{1}\leq \Phi^{-1}(u_{1}))\left[\int_{\mathbb{R}}\mathbb{I}\left(
C_{2|1}^{-1}(F_{2}(\eta_{2})|\eta_{1})\leq u_{2}\right)\phi(\eta_{2})\mathrm{d}\eta_{2}\right]\eta_{1}
\phi(\eta_{1})\mathrm{d}\eta_{1}\\
&=\int_{0}^{\Phi^{-1}(u_{1})}\eta_{1}\phi(\eta_{1})\mathrm{d}\eta_{1}=-\phi(\Phi^{-1}(u_{1})).
\end{align*}
is nonzero, where $H_{1}(\eta_{1})=\eta_{1}$ and the limit distribution in the right hand side of (4.9) is $-\phi(\Phi^{-1}(u_{1}))
\mathbb{H}_{1, 0}(1)$ and the Hermite rank is one. The process $\mathbb{H}_{1,0}(1)$ is a Gaussian with mean zero and unit variance. 
Furthermore, we also calculate
\allowdisplaybreaks
\begin{align*}
J_{0, 1}(u_{1}, u_{2})&=\mathbf{E}[\mathbb{I}(U_{t, 1}\leq u_{1}, U_{t, 2}\leq u_{2})-C(u_{1}, u_{2})]H_{1}(\eta_{2})\\
&=\int_{\mathbb{R}^{2}}\mathbb{I}(F_{1}(G_{1}(\eta))\leq u_{1}, F_{2}(G_{2}(\eta_{2},\eta_{1}))\leq u_{2})\eta_{2}
\phi(\eta_{1})\phi(\eta_{2})\mathrm{d}\eta_{1}\mathrm{d}\eta_{2}\\
&=\int_{\mathbb{R}^{2}}\mathbb{I}(\eta_{1}\leq \Phi^{-1}(u_{1}))\mathbb{I}(C_{2|1}^{-1}(F_{2}(\eta_{2})|\eta_{1})\leq u_{2})\eta_{2} 
\phi(\eta_{1})\phi(\eta_{2})\mathrm{d}\eta_{1}\mathrm{d}\eta_{2}\\
&=\int_{\mathbb{R}}\mathbb{I}(\eta\leq \Phi^{-1}(u_{1}))\left[\int_{\mathbb{R}}\mathbb{I}\left(
C_{2|1}^{-1}(F_{2}(\eta_{2})|\eta_{1})\leq u_{2}\right)\eta_{2}\phi(\eta_{2})\mathrm{d}\eta_{2}\right]\phi(\eta_{1})\mathrm{d}\eta_{1}\\
&=-\phi(C_{2|1}(u_{2}))\int_{\mathbb{R}}\mathbb{I}(\eta_{1}\leq \Phi^{-1}(u_{1}))\phi(\eta_{1})\mathrm{d}\eta_{1}\\
&=-\phi(C_{2|1}(u_{2})).
\end{align*}
is also nonzero. Thus, by the results of \cite{Taqqu1975}, the limiting processes $\mathbb{H}_{1, 0}(1)$ and $\mathbb{H}_{0, 1}(1)$
are fractional Brownian motions with mean zero, stationary increments as in (\ref{4.1}). 
Consequently, by induction method, this result can be extended to the multivariate cases. 
\end{proof}

Now we will establish a limiting distribution of empirical copula process $\mathbb{B}_{\Floor{nz}}$ with Hermite rank $r=2$ case. The 
limiting process of $\mathbb{H}_{2}(z)$ are more difficult than $\mathbb{H}_{1}(z)$: 

\begin{lemma}\label{Lemma-4.2}
If the Condition \ref{Condition-2.1} and (\ref{2.2}) holds true, the class of functions
$\mathbb{I}\{\mathbf{U}_{t}\leq \mathbf{u}\}-\mathbf{u}$ has Hermite rank $r=2$ and $0<r(1-2d)<1$, then
\begin{align}\label{4.2}
\mathbb{B}_{\Floor{nz}}(\mathbf{u})&=\frac{\Floor{nz}}{q_{n}}\{D_{\Floor{nz}}(\mathbf{u})-C(\mathbf{u})\}\rightsquigarrow
-\frac{\sqrt{2F_{1}^{-1}(u_{1})}}{2!\sqrt{\pi}}e^{-\frac{F_{1}^{-1}(u_{1})}{2}}\mathbb{H}_{2, 0\cdots, 0}(z)\nonumber\\
&\quad-\sum_{j=2}^{p}\left(\frac{\sqrt{2C_{j|1, \cdots, j-1}(F_{j}^{-1}(u_{j}))}}{2!\sqrt{\pi}}
e^{-\frac{C_{j|1, \cdots, j-1}(F_{j}^{-1}(u_{j}))}{2}}\right)\mathbb{H}_{2}^{(j)}(z),
\end{align}
where the process $\mathbb{H}_{2, 0, \cdots, 0}^{j}(z)$
are the $j$th independent copies of Rosenblatt process, which are non-Gaussian and has stationary increments, the covariance
function of $\mathbb{H}_{r_{1}, \cdots, r_{p}}(z)$ is given by
\begin{equation}\label{4.3}
\mathrm{Cov}(\mathbb{H}_{2}^{j}(z_{1}), \mathbb{H}_{2}^{j}(z_{2}))=
\frac{1}{2}\left\{|z_{1}|^{4d}+|z_{2}|^{4d}-|z_{1}-z_{2}|^{4d}\right\}.
\end{equation}
\end{lemma}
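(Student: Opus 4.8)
The plan is to mirror the proof of Lemma \ref{Lemma-4.1}, replacing the first-order Hermite polynomial by $H_2(x) = x^2 - 1$ and the fractional Brownian motion limit by the second-order Hermite (Rosenblatt) process. As there, I would first reduce to the bivariate case $p = 2$ and recover general $p$ by the induction argument indicated at the close of Lemma \ref{Lemma-4.1}. Writing the centred indicator in its Hermite expansion (\ref{2.8}), the hypothesis that the Hermite rank equals $2$ together with $0 < r(1-2d) < 1$ means, via the uniform reduction principle of \cite{Dehling1989}, that the leading stochastic behaviour of $\mathbb{B}_{\Floor{nz}}(\mathbf{u})$ is carried by the terms of total order $r_1 + r_2 = 2$, while the remainder (ranks $\geq 3$) is negligible uniformly in $(\mathbf{u}, z)$ after division by $q_n \sim n^{2d}$.

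Second, I would compute the relevant Hermite coefficients from (\ref{2.9}) using the conditional-copula representation (\ref{2.2}). Exactly as in Lemma \ref{Lemma-4.1}, the $\eta_2$-integration separates off the conditional copula factor and the remaining $\eta_1$-integration reduces to the key computation
\[
\int_{-\infty}^{\Phi^{-1}(u_1)} (\eta_1^2 - 1)\,\phi(\eta_1)\,\mathrm{d}\eta_1 = -\Phi^{-1}(u_1)\,\phi(\Phi^{-1}(u_1)),
\]
which follows from $\int_{-\infty}^{a} (x^2-1)\phi(x)\,\mathrm{d}x = -a\phi(a)$. Dividing by $r_1! = 2!$ produces the coefficient attached to $\mathbb{H}_{2,0,\cdots,0}(z)$ in (\ref{4.2}), and the symmetric computation with $H_2(\eta_2)$ gives the coefficient of $\mathbb{H}_2^{(j)}(z)$. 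This same evaluation shows the two diagonal coefficients are non-zero, so the Hermite rank is genuinely $2$.

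Third, to pass to the limit I would invoke the non-central limit theorem of \cite{Taqqu1975} in its functional form, exactly as already used to obtain (\ref{3.6}): for each $j$,
\[
\frac{1}{q_{n}} \sum_{t=1}^{\Floor{nz}} H_2(\eta_{t,j}) \rightsquigarrow \mathbb{H}_2^{(j)}(z)
\]
in $D[0,1]$ under the supremum norm, where $\mathbb{H}_2^{(j)}$ is the Rosenblatt process written as the double Wiener--It\^o integral (\ref{2.10}) against the independent white noise $W_j$. Since the $\eta_{t,j}$ are mutually independent in $j$, the limits $\mathbb{H}_2^{(j)}$ are independent copies. The covariance (\ref{4.3}) then follows from self-similarity of the Rosenblatt process with index $H = 1 - r(1-2d)/2 = 2d$ together with stationarity of increments, giving $\mathrm{Cov}(\mathbb{H}_2^{(j)}(z_1),\mathbb{H}_2^{(j)}(z_2)) = \frac{1}{2}(|z_1|^{2H}+|z_2|^{2H}-|z_1-z_2|^{2H})$ with $2H = 4d$, precisely as the covariance (\ref{4.1}) was obtained in the rank-one case. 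Combining the coefficient computation with this convergence and summing the weakly convergent pieces yields (\ref{4.2}).

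The hard part will be twofold. The first difficulty is the functional (rather than one-dimensional) non-central limit theorem itself: establishing tightness of $q_n^{-1}\sum_{t\le \Floor{nz}} H_2(\eta_{t,j})$ in $D[0,1]$ and the uniformity of the reduction principle over $\mathbf{u}\in[0,1]^{p}$ is considerably more delicate for the second Wiener chaos than for the Gaussian (rank-one) case, and would rely on the moment bounds of \cite{Dehling1989} and \cite{Taqqu1975}. The second and more structural difficulty is the treatment of the mixed term $(r_1,r_2)=(1,1)$: because $\eta_1,\eta_2$ are independent, $\mathrm{Var}(\sum_{t\le n}\eta_{t,1}\eta_{t,2}) \sim n^{4d}$ whenever $d>1/4$, i.e. of the same order $q_n^2$ as the diagonal terms, and its coefficient $J_{1,1}(\mathbf{u})$ does not vanish in general. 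Making rigorous why this cross contribution is absorbed into, or separated from, the diagonal Rosenblatt terms appearing in (\ref{4.2}) is the main obstacle of the proof.
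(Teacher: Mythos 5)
There is a genuine gap, and it sits exactly where you located the ``hard part.'' Your coefficient computation
\[
\int_{-\infty}^{\Phi^{-1}(u_1)} (\eta_1^2-1)\,\phi(\eta_1)\,\mathrm{d}\eta_1 = -\Phi^{-1}(u_1)\,\phi(\Phi^{-1}(u_1))
\]
is the \emph{second} Hermite coefficient of the half-line indicator coming from the monotone subordination $G_1=F_1^{-1}(\Phi(\eta_1))$ of (\ref{2.2}) --- but that model has Hermite rank $1$ (its first coefficient $-\phi(\Phi^{-1}(u_1))$ is nonzero, as computed in Lemma \ref{Lemma-4.1}), so the hypothesis of Lemma \ref{Lemma-4.2} is violated, and the coefficient you obtain, $-\Phi^{-1}(u_1)\phi(\Phi^{-1}(u_1))/2!$, is not the claimed $-\sqrt{2F_1^{-1}(u_1)}\,e^{-F_1^{-1}(u_1)/2}/(2!\sqrt{\pi})$. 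The paper realizes the rank-$2$ hypothesis by changing the subordinating functions to even ones, $G_1(\eta_1)=\eta_1^2$ and $G_2(\eta_1,\eta_2)=[C_{2|1}^{-1}(F_2(\eta_2)|\eta_1)]^2$, so that each indicator event becomes a symmetric interval $\{-\sqrt{F_1^{-1}(u_1)}\le\eta_1\le\sqrt{F_1^{-1}(u_1)}\}$; then $\int_{-a}^{a}(x^2-1)\phi(x)\,\mathrm{d}x=-2a\phi(a)$ with $a=\sqrt{F_1^{-1}(u_1)}$ produces exactly the stated coefficient.

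This same symmetry also disposes of the mixed term $(r_1,r_2)=(1,1)$ that you correctly identify as the structural obstacle: in the paper's setup $J_{1,0}=J_{0,1}=J_{1,1}=0$ because each factor involves an odd integrand over a symmetric set, so no cross second-chaos contribution survives and only the diagonal Rosenblatt terms $\mathbb{H}_{2,0}$ and $\mathbb{H}_{0,2}$ appear in (\ref{4.2}). Your observation that $\mathrm{Var}(\sum_{t\le n}\eta_{t,1}\eta_{t,2})$ is of the same order as $q_n^2$ when $d>1/4$ and that $J_{1,1}$ need not vanish for a generic rank-$2$ class is a valid criticism of the lemma as stated in full generality, but you leave it unresolved, whereas the proof needs it resolved for (\ref{4.2}) to hold; without the symmetric-set structure you can neither reproduce the claimed coefficients nor eliminate the cross term. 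The remaining ingredients of your plan (the functional non-central limit theorem for $q_n^{-1}\sum_{t\le\lfloor nz\rfloor}H_2(\eta_{t,j})$, independence of the limits $\mathbb{H}_2^{(j)}$ across $j$, and the covariance (\ref{4.3}) via self-similarity with $2H=4d$) are consistent with the paper.
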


\begin{proof}
Since the limiting processes of $D_{\Floor{nz}}(\mathbf{u})$ are usually dependent on the Hermite rank $r$ of 
the class of functions $\mathbb{I}(\mathbf{U}_{t}\leq \mathbf{u})-C(\mathbf{u})$ and it is determined by the functions $G_{1}$ and
$G_{2}$, we need to compute the Hermite coefficients.

Consider the function $G_{1}(\eta_{1})=\eta_{1}^{2}$ and $G_{2}(\eta_{1}, \eta_{2})=\left[C_{2|1}^{-1}(F_{2}(\eta_{2})|\eta_{1})\right]^{2}$ 
and $H_{2}(\eta_{j})=\eta_{j}^{2}-1$ for $j=1, 2$.
Thus, we calculate the Hermite coefficients $J_{2, 0}, J_{0,2}$ and $J_{1,1}$ respectively.
\allowdisplaybreaks
\begin{align*}
J_{1, 0}(u_{1}, u_{2})&=\mathbf{E}[\mathbb{I}(U_{1}\leq u_{1}, U_{2}\leq u_{2})-C(u_{1}, u_{2})]H_{1}(\eta_{1})\\
&=\mathbf{E}\left[\mathbb{I}\{F_{1}(G_{1}(\eta_{1}))\leq u_{1}, F_{2}(G_{2}(\eta_{1}, \eta_{2}))\leq u_{2}\}\right]\eta_{1}\\
&=\mathbf{E}\left[\mathbb{I}\left(-\sqrt{F_{1}^{-1}(u_{1})}\leq\eta_{1}\leq \sqrt{F_{1}^{-1}(u_{1})}\right.\right.\\
&\quad\left.\left.-\sqrt{F_{2}^{-1}(u_{2})}\leq C_{2|1}^{-1}(F_{2}(\eta_{2})|\eta_{1})\leq \sqrt{F_{2}^{-1}(u_{2})}\right)\right]\eta_{1}\\
&=\int_{\mathbb{R}}\left[\mathbb{I}\left(-\sqrt{F_{1}^{-1}(u_{1})}\leq\eta_{1}\leq \sqrt{F_{1}^{-1}(u_{1})}\right)\right]
\eta_{1}\phi(\eta_{1})\mathrm{d}\eta_{1}\\
&\quad\times\int_{\mathbb{R}}\left[\mathbb{I}\left(-\sqrt{F_{2}^{-1}(u_{2})}\leq C_{2|1}^{-1}(F_{2}(\eta_{2})|\eta_{1})\leq 
\sqrt{F_{2}^{-1}(u_{2})}\right)\right]\phi(\eta_{2})\mathrm{d}\eta_{2}=0.
\end{align*}
By symmetricity of $J_{1, 0}$ and $J_{0,1}$, we obtain $J_{0, 1}(u_{1}, u_{2})=0$. If $r=2$, then we have
\allowdisplaybreaks
\begin{align*}
J_{2, 0}(u_{1}, u_{2})&=\mathbf{E}[\mathbb{I}(U_{1}\leq u_{1}, U_{2}\leq u_{2})-C(u_{1}, u_{2})]H_{2}(\eta_{1})\\
&=\mathbf{E}[\mathbb{I}(U_{1}\leq u_{1}, U_{2}\leq u_{2})-C(u_{1}, u_{2})](\eta_{1}^{2}-1)\\
&=\int_{\mathbb{R}}\left[\mathbb{I}\left(-\sqrt{F_{1}^{-1}(u_{1})}\leq\eta_{1}\leq \sqrt{F_{1}^{-1}(u_{1})}\right)\right]
(\eta_{1}^{2}-1)\phi(\eta_{1})\mathrm{d}\eta_{1}\\
&\quad\times\int_{\mathbb{R}}\left[\mathbb{I}\left(-\sqrt{F_{2}^{-1}(u_{2})}\leq C_{2|1}^{-1}(F_{2}(\eta_{2})|\eta_{1})\leq 
\sqrt{F_{2}^{-1}(u_{2})}\right)\right]\phi(\eta_{2})\mathrm{d}\eta_{2}\\
&=\int_{-\sqrt{F_{1}^{-1}(u_{1})}}^{\sqrt{F_{1}^{-1}(u_{1})}}(\eta_{1}^{2}-1)\phi(\eta_{1})\mathrm{d}\eta_{1}
=-\frac{\sqrt{2F_{1}^{-1}(u_{1})}}{\sqrt{\pi}}e^{-\frac{F_{1}^{-1}(u_{1})}{2}}.
\end{align*}
Note that $x_{1}=F_{1}^{-1}(u_{1})$. By symmetricity of $J_{0,2}(\mathbf{u})$ and $J_{2,0}(\mathbf{u})$, we can easly derive 
\allowdisplaybreaks
\begin{align*}
J_{0, 2}(u_{1}, u_{2})&=\mathbf{E}[\mathbb{I}(U_{1}\leq u_{1}, U_{2}\leq u_{2})-C(u_{1}, u_{2})]H_{2}(\eta_{2})\\
&=\mathbf{E}[\mathbb{I}(U_{1}\leq u_{1}, U_{2}\leq u_{2})-C(u_{1}, u_{2})](\eta_{2}^{2}-1)\\
&=\int_{\mathbb{R}}\left[\mathbb{I}\left(-\sqrt{F_{1}^{-1}(u_{1})}\leq\eta_{1}\leq \sqrt{F_{1}^{-1}(u_{1})}\right)\right]
\phi(\eta_{1})\mathrm{d}\eta_{1}\\
&\quad\times\int_{\mathbb{R}}\left[\mathbb{I}\left(-\sqrt{F_{2}^{-1}(u_{2})}\leq C_{2|1}^{-1}(F_{2}(\eta_{2})|\eta_{1})\leq 
\sqrt{F_{2}^{-1}(u_{2})}\right)\right](\eta_{2}^{2}-1)\phi(\eta_{2})\mathrm{d}\eta_{2}\\
&=\int_{C_{2|1}(-\sqrt{F_{2}^{-1}(u_{2})}|\eta_{1})}^{C_{2|1}(\sqrt{F_{2}^{-1}(u_{2})}|\eta_{1})}
(\eta_{2}^{2}-1)\phi(\eta_{2})\mathrm{d}\eta_{2}\\
&=-\frac{\sqrt{2C_{2|1}(F_{1}^{-1}(u_{2})|\eta_{1}})}{\sqrt{\pi}}e^{-\frac{C_{2|1}(F_{1}^{-1}(u_{2})|\eta_{1})}{2}}.
\end{align*}
Thus, limiting processes $\mathbb{H}_{2, 0}(z), \mathbb{H}_{0,2}(z)$ are called Rosenblatt process and Hermite rank $r=2$,
it is not difficult to see that $\mathbb{H}_{2,0}(z), \mathbb{H}_{0,2}(z)$ are non-Gaussian, and has stationary increments
with covariance
\begin{equation*}
\mbox{Cov}(\mathbb{H}_{2, 0}(z_{1}), \mathbb{H}_{2,0}(z_{2}))=\frac{1}{2}
\left\{|z_{1}|^{4d}+|z_{2}|^{4d}-|z_{1}-z_{2}|^{4d}\right\}.
\end{equation*}
However, we need to calculate
\allowdisplaybreaks
\begin{align*}
J_{1,1}(u_{1}, u_{2})&=\mathbf{E}[\mathbb{I}(F_{1}(G_{1}(\eta_{1}))\leq u_{1}, F_{2}(G_{2}(\eta_{1}, \eta_{2}))\leq u_{2})]
H_{1}(\eta_{1})H_{1}(\eta_{2})\\
&=\int_{\mathbb{R}}\left[\mathbb{I}\left(-\sqrt{F_{1}^{-1}(u_{1})}\leq\eta_{1}\leq \sqrt{F_{1}^{-1}(u_{1})}\right)\right]
\eta_{1}\phi(\eta_{1})\mathrm{d}\eta_{1}\\
&\quad\times\int_{\mathbb{R}}\left[\mathbb{I}\left(-\sqrt{F_{2}^{-1}(u_{2})}\leq C_{2|1}^{-1}(F_{2}(\eta_{2})|\eta_{1})\leq 
\sqrt{F_{2}^{-1}(u_{2})}\right)\right]\eta_{2}\phi(\eta_{2})\mathrm{d}\eta_{2}\\
&=\int_{-\sqrt{F_{1}^{-1}(u_{1})}}^{\sqrt{F_{1}^{-1}(u_{1})}}\eta_{1}\phi(\eta_{1})\mathrm{d}\eta_{1}
\int_{C_{2|1}(-\sqrt{F_{2}^{-1}(u_{2})}|\eta_{1})}^{C_{2|1}(\sqrt{F_{2}^{-1}(u_{2})}|\eta_{1})}\eta_{2}
\phi(\eta_{2})\mathrm{d}\eta_{2}\\
&=0.
\end{align*}
By the results of \cite{Taqqu1975}, we can now conclude that the limiting distributions $\mathbb{H}_{2, 0}(z)$ and $\mathbb{H}_{0,2}(z)$ of 
$D_{\Floor{nz}}(\mathbf{u})$ are no longer Gaussian, which are called the Rosenblatt process with stationary increments as described
in (\ref{4.3}). By using the induction, we can conclude that (\ref{4.2}) holds true. 
\end{proof}

Furthermore, the limiting distributions of the process $\mathbb{B}_{\Floor{nz}}$ are no longer a Gaussian process 
(which are called Hermite processes) if the class of functions $\mathbb{I}\{\mathbf{U}_{t}\leq \mathbf{u}\}-C(\mathbf{u})$
has Hermite rank $r>2$. We will summarize this resul in the following theorem.

\begin{theorem}\label{Theorem-4.3}
Let $(\eta_{t,j})_{t\in\mathbb{Z}}$ be stationary Gaussian processes satisfying Condition \ref{Condition-2.1} and \ref{Condition-4.1}.
Let the subordinated process $X_{t, j}=G_{j}(\eta_{t})$ as in (\ref{2.2}). Then
the class of functions $\{\mathbb{I}\{\mathbf{U}_{t}\leq \mathbf{u}\}-C(\mathbf{u})$ have Hermite rank $\sum_{j=1}^{p}r_{j}=r>2$ 
and satisfies $0<\sum_{j=1}^{p}r_{j}(1-2d_{j})<1$ for $d_{j}\neq d\in(0, 1/2)$ such that
\begin{equation}\label{4.4}
\mathbb{B}_{\Floor{nz}}(\mathbf{u})=\frac{\Floor{nz}}{q_{n}(r_{11}^{*}\cdots r_{1p}^{*})}\{D_{\Floor{nz}}(\mathbf{u})-C(\mathbf{u})\}
\rightsquigarrow\sum_{i=1}^{M}\rho_{i}\frac{J_{r_{11}^{*}\cdots r_{1p}^{*}}(\mathbf{u})}
{r_{11}^{*}!\cdots r_{1p}^{*}!}\mathbb{H}_{r_{11}^{*}\cdots r_{1p}^{*}}(z),
\end{equation}
and If the all memory parameters $d_{j}=d\in(0, 1/2)$, then
\begin{equation}\label{4.5}
\mathbb{B}_{\Floor{nz}}(\mathbf{u})=\frac{\Floor{nz}}{q_{n}(r_{1}\cdots r_{1})}\{D_{\Floor{nz}}(\mathbf{u})-C(\mathbf{u})\}
\rightsquigarrow \sum_{\sum_{j=1}^{p}r_{j}=r}\frac{J_{r_{1}\cdots r_{1}}(\mathbf{u})}{r_{1}!\cdots r_{p}!}\mathbb{H}_{r_{1}\cdots r_{1}}(z),
\end{equation}
for all $\mathbf{u}\in[0,1]^{p}$ and $z\in[0, 1]$ in the space $l^{\infty}([0,1]^{p+1})$ of uniformly bounded functions on $[0,1]^{p+1}$
equipped with supremum norm $\|\cdot\|_{\infty}$, where the Hermite process $\mathbb{H}_{r_{1}\cdots r_{p}}(z)$ is given in (\ref{2.10}). 
\end{theorem}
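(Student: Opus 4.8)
The plan is to combine the Hermite expansion (\ref{2.8}) with a multivariate uniform reduction principle and then to verify tightness, extending the rank-one and rank-two arguments of Lemmas \ref{Lemma-4.1} and \ref{Lemma-4.2} to arbitrary rank $r>2$. First I would insert the expansion (\ref{2.8}) into $\mathbb{B}_{\Floor{nz}}(\mathbf{u})$ and split the normalized sum into a dominant part and a remainder $R_n(\mathbf{u},z)$. In the equal-memory case $d_j=d$, the dominant part collects all multi-indices with $\sum_{j=1}^{p}r_j=r$, since each product $\prod_{j=1}^{p}H_{r_j}(\eta_{t,j})$ then carries the same stochastic order $n^{1-r(1-2d)/2}$; in the unequal-memory case the dominant part collects the $M$ minimizers $(r_{i1}^{*},\dots,r_{ip}^{*})$ of $\sum_{j=1}^{p}r_j(1-2d_j)$ identified before Condition \ref{Condition-4.1}.

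For the dominant part, I would appeal to the reduction theorem already used in the proof of Lemma \ref{Lemma-3.1} (cf. \cite{Dehling1989}, \cite{Taqqu1975}): for each fixed multi-index the normalized partial sum $q_n^{-1}\sum_{t=1}^{\Floor{nz}}\prod_{j=1}^{p}H_{r_j}(\eta_{t,j})$ converges weakly, jointly in $z$, to the Hermite process $\mathbb{H}_{r_1\cdots r_p}(z)$ of (\ref{2.10}). Multiplying by the deterministic, continuous Hermite coefficients $J_{r_1\cdots r_p}(\mathbf{u})/(r_1!\cdots r_p!)$ from (\ref{2.9}) and summing yields the series on the right-hand sides of (\ref{4.4}) and (\ref{4.5}). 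In the unequal-memory case the several minimizers share the same scale only up to the ratio of normalizing factors $q_n(r_{i1}^{*}\cdots r_{ip}^{*})/q_n(r_{11}^{*}\cdots r_{1p}^{*})$, and Condition \ref{Condition-4.1} guarantees that these ratios converge to the constants $\rho_i$, producing exactly the weights $\rho_i$ in (\ref{4.4}).

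The remaining work is to show that the remainder---the sum of all Hermite terms of strictly larger order $\sum_{j=1}^{p}r_j(1-2d_j)$---is asymptotically negligible uniformly in $(\mathbf{u},z)\in[0,1]^{p+1}$. Here I would bound the second moment of $R_n$ using the orthogonality relation (\ref{2.7}) together with the variance asymptotics $\sigma^2_{r_1\cdots r_p}(n)$, noting that each excluded term is normalized by a factor that is $o(1)$ relative to $q_n$ and that the tail of the expansion is summable by Parseval (since $\|G_j\|^2<\infty$); this forces $\sup_{\mathbf{u},z}|R_n(\mathbf{u},z)|\stackrel{p}\rightarrow 0$ after a chaining argument over the compact index set, exactly as in the multivariate uniform reduction principle cited in the introduction (cf. \cite{Arcones1994}, \cite{Mounirou2016}). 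Finally, tightness in $\ell^{\infty}([0,1]^{p+1})$ follows by combining the smoothness of the coefficients $\mathbf{u}\mapsto J_{r_1\cdots r_p}(\mathbf{u})$, which controls the increments of the dominant part, with the stochastic equicontinuity of the sample paths of the Hermite processes and the uniform negligibility of $R_n$.

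I expect the main obstacle to be precisely this uniform control of the remainder together with tightness: establishing a genuinely multivariate reduction principle valid simultaneously over $\mathbf{u}\in[0,1]^{p}$ and $z\in[0,1]$ requires careful variance and moment estimates for the increments of the empirical process across the $(p+1)$-dimensional index set, and the dependence of the coefficients $J_{r_1\cdots r_p}(\mathbf{u})$ on $\mathbf{u}$ through the conditional quantile copulas $C_{j|1,\dots,j-1}^{-1}$ complicates the equicontinuity bounds.
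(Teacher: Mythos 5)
Your proposal follows essentially the same route as the paper's proof: expand $\mathbb{I}\{\mathbf{U}_{t}\le\mathbf{u}\}-C(\mathbf{u})$ in Hermite polynomials, isolate the dominant multi-indices (the $M$ minimizers with weights $\rho_i$ from Condition \ref{Condition-4.1} in the unequal-memory case, all indices with $\sum_j r_j=r$ in the equal-memory case), and invoke the multivariate reduction principle of \cite{Mounirou2016} together with Theorem 9 of \cite{Arcones1994} for the weak convergence of the normalized Hermite sums to the processes in (\ref{2.10}). If anything, you are more explicit than the paper, which writes the expansion as if it consisted only of the dominant terms and delegates the uniform negligibility of the remainder and tightness entirely to the cited references, whereas you spell out the second-moment/orthogonality bound and the equicontinuity step.
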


\begin{proof}
The asymptotic behavior of the empirical processes $\mathbb{B}_{\Floor{nz}}(\mathbf{u})$ can be established
based on the multivariate reduction principle obtained by \cite{Mounirou2016} for $d_{j}\neq d$ for $j=1, \cdots, p$.
or the results of Theorem 9 in \cite{Arcones1994} for $d_{j}=d$ for $j=1, \cdots, p$ respectively. 
Since $X_{t,j}\leq F_{\Floor{nz},j}^{-1}(u_{j})\leq u_{j}$ if and only if $U_{t,j}\leq D_{\Floor{nz},j}^{-1}(u_{j})$ for all
$t\in\{1, \cdots, n\}, j\in \{1, \cdots, p\}$. Without loss of generality, we can assume that $\mathbf{X}_{t}$ 
and $\mathbf{U}_{t}$ has copula $C$ such that the multivariate expansion of $D_{\Floor{nz}}(\mathbf{u})-C(\mathbf{u})$ in Hermite 
polynomials is written as
\begin{align*}
&\frac{\Floor{nz}}{q_{n}(r_{11}^{*}\cdots r_{1p}^{*})}\{D_{\Floor{nz}}(\mathbf{u})-C(\mathbf{u})\}\\
&\quad=\frac{1}{q_{n}(r_{11}^{*}\cdots r_{1p}^{*})}
\sum_{i=1}^{M}\frac{J_{r_{i1}^{*}\cdots r_{ip}^{*}}(\mathbf{u})}{r_{i1}^{*}!\cdots r_{ip}^{*}!}\sum_{t=1}^{\Floor{nz}}
H_{r_{i1}^{*}}(\eta_{t,1})\cdots H_{r_{ip}}^{*}(\eta_{t,p}),
\end{align*}
then based on the result of Proposition 1 and 2 in \cite{Mounirou2016}, we derive
\begin{align*}
\frac{1}{q_{n}(r_{11}^{*}\cdots r_{1p}^{*})}\sum_{t=1}^{\Floor{nz}}H_{r_{i1}^{*}}(\eta_{t,1})\cdots H_{r_{ip}}^{*}(\eta_{t,p})
\rightsquigarrow \mathbb{H}_{r_{i1}^{*}\cdots r_{ip}^{*}}(z),
\end{align*}
in the unifrom space $D([0, 1]^{p})$, Thus we derive (\ref{4.4}).
If the memory parameters $d_{j}=d, ~j=1, \cdots, p$ are all equal, then (\ref{4.5}) can be derived based on the Theorem 9 in
\cite{Arcones1994}. However, they prove their results by using a tightness condition on the empirical process rather than a uniform
reduction principle. 
\end{proof}

\subsection{Weak convergence of the processes in (\ref{2.6})}

\hspace{0.5cm}
In this section, we first study the aysmptotic behavior of the empirical processes $\mathbb{B}_{\Floor{nz}}$ under the 
long-range dependent sequences, where the mempory parameters $d_{j}$ are all equal. Then, 
to establish the weak convergence of the empirical copula process $\mathbb{C}_{\Floor{nz}}$ under long-range dependence,
a smoothness condition on copula $C$ as point out by \cite{Segers2012} is still needed.
This condition is very useful to establish the weak convergence of empirical copula process $\mathbb{C}_{\Floor{nz}}$ on the 
boundaries as well as important for the limiting process of $\mathbb{C}_{\Floor{nz}}$ to exist and to have continuous trajectories. 

\begin{condition}\label{Condition-4.4}
For $j=1,\cdots, p$, the $j$th first-order partial derivative $\dot{C}_{j}$ exists and is continuous
on the set $\{\mathbf{u}\in[0,1]^{p}|~ u_{j}\in(0,1)\}$ for all $j=1, \cdots, p$.
\end{condition}

Under Condition \ref{Condition-4.4}, the partial derivatives $C_{j}$ can be defined on whole unit cube $[0,1]^{p}$ by
\begin{align}\label{4.6}
\dot{C}_{j}(\mathbf{u})&=\begin{cases}
\limsup\limits_{h\rightarrow 0}\frac{C(\mathbf{u}+h\mathbf{e}_{j})-C(\mathbf{u})}{h}, &\mbox{for}~ 0<u_{j}<1,\\
\limsup\limits_{h\downarrow 0}\frac{C(\mathbf{u}+h\mathbf{e}_{j})}{h}, &\mbox{for}~ u_{j}=0,\\
\limsup\limits_{h\rightarrow 0}\frac{C(\mathbf{u})-C(\mathbf{u}-h\mathbf{e}_{j})}{h}, &\mbox{for}~ u_{j}=1,
\end{cases}
\end{align}
where $\mathbf{e}_{1}$ denotes the first column of a $2\times 2$ matrix and $\mathbf{u}=(u_{1}, \cdots, u_{p})^{'}\in[0,1]^{p}$. 
It can be seen from (\ref{4.6}) that it expand the application of the many copula families, see, e.g., \cite{Segers2012}.

The asymptotic behavior of the empirical copula process under i.i.d and weakly dependent cases has been established in number of 
literature \cite{Ruschendorf1976}, \cite{Fermanian2004},\cite{Doukhan2008},\cite{Segers2012},\cite{BucherRuppert2013} and \cite{Bucher2013}. 
Here we study the limiting distributions of the sequential empirical copula processes with the Hermite rank $r\geq 1$. 
If the class of functions $\mathbb{I}\{\widehat{\mathbf{U}}_{t}\leq \mathbf{u}\}-C(\mathbf{u})$ with 
$\widehat{\mathbf{U}}_{t}=(\widehat{U}_{t,1}, \cdots, \widehat{U}_{t,p})^{'}$ for all $\mathbf{u}\in[0,1]^{p}$ 
has Hermite rank $r\geq 1$, then the limiting distributions converges to Hermite processes. Our main results are summarized
in the following theorem:

\begin{theorem}\label{Theorem-4.5}
Let $(\eta_{t,j})_{t\in\mathbb{Z}}$ be stationary Gaussian processes satisfying Condition \ref{Condition-2.1} and \ref{Condition-4.1}.
Let the subordinated process $X_{t, j}=G_{j}(\eta_{t})$ as in (\ref{2.2}). Then
the class of functions $\mathbb{I}\{\widehat{\mathbf{U}}_{t}\leq \mathbf{u}\}-C(\mathbf{u})$ have Hermite rank $r$
and satisfies $0<\sum_{j=1}^{p}r_{j}(1-2d_{j})<1$. Moreover, let $J_{r_{1},\cdots, r_{p}}(\mathbf{u})$ and the derivatives
$J_{r_{1},\cdots, r_{p}}^{'}(\mathbf{u})$ are uniformly bounded and $\sup_{\mathbf{u}\in[0, \delta_{n}]}
|J_{r_{1},\cdots, r_{p}}^{'}(\mathbf{u})|=O(\delta_{n})$. If copula $C$ satisfies the Condition \ref{Condition-4.4}
 and the memory parameters are same $d_{j}=d\in (0, 1/2)$, then
\begin{equation*}
\mathbb{C}_{\Floor{nz}}(\mathbf{u})=\frac{\Floor{nz}}{q_{n}(r_{1}\cdots r_{p})}\{C_{\Floor{nz}}(\mathbf{u})-C(\mathbf{u})\}
\end{equation*}
converges weakly to
\begin{align}\label{4.7}
\mathbb{C}_{z}(\mathbf{u})&=\sum_{\sum_{j=1}^{p}r_{j}=r}\frac{J_{r_{1}\cdots r_{p}}(\mathbf{u})}{r_{1}
!\cdots r_{p}!}\mathbb{H}_{r_{1}\cdots r_{p}}(z)-\dot{C}_{1}(\mathbf{u})\frac{J_{r_{1}}(u_{1})}{r_{1}!}
\mathbb{Z}_{r_{1}}(z)\nonumber\\
&-\dot{C}_{2}(\mathbf{u})\sum_{r_{1}+r_{2}=m_{2}}^{\infty}\frac{J_{r_{1}, r_{2}}(u_{2})}{r_{1}!r_{2}!}\mathbb{Z}_{r_{1}, r_{2}}(z)
\cdots -\dot{C}_{p}(\mathbf{u})\sum_{r_{1}+\cdots+r_{p}=m_{p}}^{\infty}\frac{J_{r_{1}\cdots r_{p}}(u_{p})}
{r_{!}\cdots !r_{p}!}\mathbb{Z}_{r_{1}\cdots r_{p}}(z)
\end{align}
for all $\mathbf{u}\in [0, 1]^{p}, z\in[0, 1]$ in $l^{\infty}([0,1]^{p+1})$, where $\dot{C}_{j}$ denotes the
$j$-th partial derivatives of $C$. 
\end{theorem}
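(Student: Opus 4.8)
The plan is to follow the classical functional decomposition of the empirical copula process, but with the uniform reduction principle behind Theorem~\ref{Theorem-4.3} and Corollary~\ref{Corollary-3.3} playing the role that Donsker-type tightness plays in the i.i.d.\ and weakly dependent theory. Writing $\widehat{u}_{j}=D_{\Floor{nz},j}^{-1}(u_{j})$ and $\widehat{\mathbf{u}}=(\widehat{u}_{1},\ldots,\widehat{u}_{p})$, I would start from $C_{\Floor{nz}}(\mathbf{u})=D_{\Floor{nz}}(\widehat{\mathbf{u}})$ and split
\begin{equation*}
C_{\Floor{nz}}(\mathbf{u})-C(\mathbf{u})=\bigl[D_{\Floor{nz}}(\widehat{\mathbf{u}})-C(\widehat{\mathbf{u}})\bigr]+\bigl[C(\widehat{\mathbf{u}})-C(\mathbf{u})\bigr].
\end{equation*}
After multiplying by $\Floor{nz}/q_{n}(r_{1}\cdots r_{p})$, the first bracket should reproduce the joint empirical process $\mathbb{B}_{\Floor{nz}}(\mathbf{u})$ up to a negligible remainder, while a first-order Taylor expansion of $C$ under Condition~\ref{Condition-4.4} should turn the second bracket into the marginal corrections involving the partial derivatives $\dot{C}_{j}$.

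For the first bracket the key is a stochastic equicontinuity argument. Because the marginal quantiles are uniformly consistent---a consequence of Corollary~\ref{Corollary-3.3} together with the strong approximation in Proposition~\ref{Proposition-3.4}---one has $\|\widehat{\mathbf{u}}-\mathbf{u}\|_{\infty}\rightarrow 0$ in probability, so it suffices to show $\sup_{\mathbf{u},z}|\mathbb{B}_{\Floor{nz}}(\widehat{\mathbf{u}})-\mathbb{B}_{\Floor{nz}}(\mathbf{u})|\rightarrow 0$ in probability. Here the long-memory structure is a genuine simplification rather than a complication: by the reduction principle behind Theorem~\ref{Theorem-4.3}, $\mathbb{B}_{\Floor{nz}}(\mathbf{u})$ is uniformly close to $\sum_{\sum r_{j}=r}\frac{J_{r_{1}\cdots r_{p}}(\mathbf{u})}{r_{1}!\cdots r_{p}!}\,q_{n}^{-1}\sum_{t=1}^{\Floor{nz}}H_{r_{1}\cdots r_{p}}(\eta_{t})$, so the dependence on $\mathbf{u}$ resides entirely in the deterministic Hermite coefficients $J_{r_{1}\cdots r_{p}}$. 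The increment is therefore controlled by $|J_{r_{1}\cdots r_{p}}(\widehat{\mathbf{u}})-J_{r_{1}\cdots r_{p}}(\mathbf{u})|$ times the tight normalized partial sum in $z$, and the assumed uniform bounds on $J_{r_{1}\cdots r_{p}}$ and $J'_{r_{1}\cdots r_{p}}$ make this vanish uniformly.

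For the second bracket I would write, using Condition~\ref{Condition-4.4},
\begin{equation*}
C(\widehat{\mathbf{u}})-C(\mathbf{u})=\sum_{j=1}^{p}\dot{C}_{j}(\mathbf{u})\,(\widehat{u}_{j}-u_{j})+o_{p}\bigl(\|\widehat{\mathbf{u}}-\mathbf{u}\|\bigr),
\end{equation*}
the boundary hypothesis $\sup_{\mathbf{u}\in[0,\delta_{n}]}|J'_{r_{1}\cdots r_{p}}(\mathbf{u})|=O(\delta_{n})$ serving to control the edges of $[0,1]^{p}$, where the partial derivatives may blow up. Multiplying by $\Floor{nz}/q_{n}$ converts $(\widehat{u}_{j}-u_{j})$ into $(q_{nj}/q_{n})\,\mathbb{Q}_{\Floor{nz},j}(u_{j})$; Proposition~\ref{Proposition-3.4} then replaces $\mathbb{Q}_{\Floor{nz},j}$ by the marginal process $\mathbb{B}_{\Floor{nz},j}$ with uniformly negligible error, and Corollary~\ref{Corollary-3.3} identifies its weak limit as the marginal Hermite process $\sum\frac{J_{j(r_{1}\cdots r_{p})}(u_{j})}{\prod r_{j}!}\,\mathbb{Z}_{j(r_{1}\cdots r_{p})}(z)$, which is precisely the $\dot{C}_{j}$-term in~\eqref{4.7}.

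Finally, since every summand is a continuous functional of the same collection of normalized Hermite sums, the joint weak convergence of $(\mathbb{B}_{\Floor{nz}},\mathbb{B}_{\Floor{nz},1},\ldots,\mathbb{B}_{\Floor{nz},p})$ follows from the multivariate reduction principle, and the continuous mapping theorem delivers the limit $\mathbb{C}_{z}(\mathbf{u})$ of~\eqref{4.7} in $l^{\infty}([0,1]^{p+1})$. I expect the main obstacles to be twofold: uniformly controlling the higher-order Hermite remainders in the equicontinuity step, so that the substitution of $\widehat{\mathbf{u}}$ is truly harmless over all of $[0,1]^{p}\times[0,1]$ including the boundary, and verifying that the joint and marginal contributions sit at a common scale, i.e.\ that the ratios $q_{nj}/q_{n}$ converge to finite limits. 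The latter is exactly where the equal-memory assumption $d_{j}=d$ and the rank bookkeeping of Condition~\ref{Condition-4.1} are essential, since they force the Hermite ranks governing the joint process and the surviving marginal terms to coincide.
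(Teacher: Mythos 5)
Your proposal follows essentially the same route as the paper: the same decomposition of $C_{\Floor{nz}}(\mathbf{u})-C(\mathbf{u})$ into the joint empirical process evaluated at the estimated marginals plus a copula increment, the same stochastic-equicontinuity step via the Hermite-coefficient reduction, the same use of Proposition~\ref{Proposition-3.4} to pass from $\mathbb{Q}_{\Floor{nz},j}$ to $\mathbb{B}_{\Floor{nz},j}$, and the same Segers-style treatment of the partial-derivative terms under Condition~\ref{Condition-4.4}. The argument is correct in outline and matches the paper's proof.
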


\begin{proof} The weak convergence of $\mathbb{C}_{\Floor{nz}}$ can be achieved by two different methods. One of the methods is
functional delta method, which was applied in \cite{Fermanian2004} and \cite{Bucher2011} for i.i.d 
case respectively. The another one is Seger's method introduced in \cite{Segers2012}. These both methods are still valid for the case
of long-memory sequences to derive the weak convergence of the sequential empirical copula processes $\mathbb{C}_{\Floor{nz}}$. 
Here we prefer to establish the weak convergence of $\mathbb{C}_{\Floor{nz}}$ by using Segers' approach. 

To show the weak convergence $\mathbb{C}_{\Floor{nz}}\rightsquigarrow \mathbb{C}_{z}$, it suffices to prove for 
$\mathbf{u}\in[0,1]^{p}$: 
\begin{equation*}
\sup_{z\in[0,1],\mathbf{u}\in[0,1]^{p}}\left|\widehat{\mathbb{C}}_{\Floor{nz}}(\mathbf{u}-\mathbb{C}_{\Floor{nz}}(\mathbf{u})\right|
\stackrel{p}{\rightarrow} 0, \quad\text{as}~ n\rightarrow\infty,
\end{equation*}
where 
\begin{align*}
\widehat{\mathbb{C}}_{\Floor{nz}}(\mathbf{u})&=\mathbb{B}_{\Floor{nz}}(\mathbf{u})-\sum_{j=1}^{p}\dot{C}_{j}(\mathbf{u})
\mathbb{B}_{\Floor{nz}, j}(u_{j})
\end{align*}
is the sequence of processes of $\mathbb{C}_{z}(\mathbf{u})$ in $\ell^{\infty}([0,1]^{p+1})$. The supremum is zero if $u_{j}=0$ 
for some $j=1, \cdots, p$. For simpilicity, we denote the quantile marginal sequential empirical distributions as
\begin{equation*}
v_{\Floor{nz}}(\mathbf{u})=(D_{\Floor{nz},1}^{-1}(u_{1}), \cdots, D_{\Floor{nz}, 2}^{-1}(u_{p}))\quad 
\text{for all}~ \mathbf{u}\in[0,1]^{p}, z\in[0,1]
\end{equation*}
then we decompose the empirical copula process $\mathbb{C}_{\Floor{nz}}$ by
\begin{align}\label{4.8}
\mathbb{C}_{\Floor{nz}}(\mathbf{u})&=\frac{[nz]}{q_{n}(r_{1}\cdots r_{p})}\{C_{\Floor{nz}}(\mathbf{u})-C(\mathbf{u})\}\nonumber\\
&=\frac{\Floor{nz}}{q_{n}(r_{1}\cdots r_{p})}\{D_{\Floor{nz}}(v_{n}(\mathbf{u}))-C(v_{\Floor{nz}}(\mathbf{u}))\}
+\frac{\Floor{nz}}{q_{n}(r_{1}\cdots r_{p})}\{C(v_{\Floor{nz}}(\mathbf{u}))-C(\mathbf{u})\}\nonumber\\
&=\mathbb{B}_{\Floor{nz}}(v_{\Floor{nz}}(\mathbf{u}))+\frac{[nz]}{q_{n}(r_{1}\cdots r_{p})}\{C(v_{\Floor{nz}}(\mathbf{u}))-C(\mathbf{u})\}.
\end{align}
Note that the normalizing factor $q_{n}(r_{11}^{*}\cdots r_{1p}^{*})$ can be reduced to $q_{n}(r_{1}\cdots r_{p})$ if the memory
parameters $d_{j}=d$ are all equal. As we proved above, $\mathbb{B}_{\Floor{nz}}(\mathbf{u})$ converges weakly to 
the process in (\ref{3.3}) in a metric space $l^{\infty}([0,1]^{p+1})$ and have continuous trajectories. From the proof of Proposition 
\ref{Proposition-3.4}, we have
\begin{align*}
\sup_{z\in[0,1], u_{j}\in[0,1]}|D_{\Floor{nz},j}^{-1}(u_{j})-u_{j}|\rightarrow 0, \quad\text{a.s.}
\end{align*}
Then we show that
\allowdisplaybreaks
\begin{align}\label{4.9}
&\sup_{z\in[0,1], \mathbf{u}\in[0,1]^{p}}\left|\mathbb{B}_{\Floor{nz}}(v_{\Floor{nz}}(\mathbf{u}))-\mathbb{B}_{\Floor{nz}}(\mathbf{u})
\right|\nonumber\\
&\quad\leq\left|\frac{\Floor{nz}}{q_{n}(r_{1}\cdots r_{p})}\{D_{\Floor{nz}}(v_{\Floor{nz}}(\mathbf{u}))
-D_{\Floor{nz}}(\mathbf{u}))\}\right|\nonumber\\
&\quad=\sup_{z\in[0,1], \mathbf{u}\in[0,1]^{p}}\frac{1}{q_{n}(r_{1},\cdots, r_{p})}
\left|\frac{J_{r_{1},\cdots, r_{p}}(v_{\Floor{nz}}(\mathbf{u}))}{\prod_{j=1}^{p}r_{j}!}\sum_{t=1}^{\Floor{nz}}
H_{r_{1},\cdots, r_{p}}(\eta_{t})\right.\nonumber\\
&\qquad\left.-\frac{J_{r_{1},\cdots, r_{p}}(\mathbf{u})}{\prod_{j=1}^{p}r_{j}!}\sum_{t=1}^{\Floor{nz}}
H_{r_{1},\cdots, r_{p}}(\eta_{t})\right|\nonumber\\
&\quad\leq\frac{1}{q_{n}(r_{1},\cdots, r_{p})\prod_{j=1}^{p}r_{j}!}
\sup_{z\in[0,1]}\left|\sum_{t=1}^{\Floor{nz}}H_{r_{1}}(\eta_{t})\cdots H_{r_{1}}(\eta_{t})\right|\nonumber\\
&\qquad\times\sup_{z\in[0,1], \mathbf{u}\in[0,1]^{p}}\left|J_{r_{1},\cdots, r_{p}}(v_{\Floor{nz}}(\mathbf{u}))
-J_{r_{1},\cdots, r_{p}}(\mathbf{u})\right|,
\end{align}
where $\left|\sum_{t=1}^{\Floor{nz}}H_{r_{1}, \cdots, r_{p}}(\eta_{t})\right|$ is bounded as we shown in (\ref{3.10}).
By the mean value theorem, we get
\begin{align*}
&\sup_{z\in[0,1], \mathbf{u}\in[0,1]^{p}}\left|J_{r_{1},\cdots, r_{p}}(v_{\Floor{nz}}(\mathbf{u}))
-J_{r_{1},\cdots, r_{p}}(\mathbf{u})\right|\\
&\quad=\sup_{z\in[0,1], \mathbf{u}\in[0,1]^{p}}|v_{\Floor{nz}}(\mathbf{u})-\mathbf{u}||J_{r_{1},\cdots, r_{p}}^{'}
(\widetilde{v}_{\Floor{nz}}(\mathbf{u}))|,
\end{align*}
where $\sup_{z\in[0,1], \mathbf{u}\in[0,1]^{p}}|v_{\Floor{nz}}(\mathbf{u})-\mathbf{u}|\rightarrow 0$ and 
$|\widetilde{v}_{\Floor{nz}}(\mathbf{u})-\mathbf{u}|\leq |v_{\Floor{nz}}(\mathbf{u})-\mathbf{u}|$ and 
$|J_{r_{1},\cdots, r_{p}}^{'}(\mathbf{u})|$ is uniformly bounded. 
Therefore, (\ref{4.9}) converges to zero in distribution. 

For the second term in the right hand side of (\ref{4.8}),
we set $\mathcal{A}(\delta)=\mathbf{u}+\delta\{v_{n}(\mathbf{u})-\mathbf{u}\}$ and $f(\delta)=C(\mathcal{A}(\delta))$ 
for fixed $\mathbf{u}\in[0,1]^{p}$. If $\mathbf{u}\in(0, 1]^{p}$, then $v_{n}(\mathbf{u})\in (0, 1)^{p}$ and $\mathcal{A}(\delta)
\in (0, 1)^{p}$ for all $\delta\in (0, 1]$. The Condition \ref{Condition-4.4} yields the function $f$ is continuous on $[0,1]$
and continuously differentiable on $(0, 1)$. By the mean value theorem, we can write
\begin{align}\label{4.10}
\frac{\Floor{nz}}{q_{n}(r_{1}\cdots r_{p})}\{C(v_{\Floor{nz}}(\mathbf{u}))-C(\mathbf{u})\}&=\frac{\Floor{nz}}{q_{nj}}
\sum_{j=1}^{p}\dot{C}_{j}(\mathcal{A}(\delta))\{D_{\Floor{nz},j}^{-1}(u_{j})-u_{j}\}.
\end{align}
From (\ref{4.10}), we note that if the $j$-th component $u_{ja number of}=0$, then the terms on the right hand side of (\ref{4.10}) is still 
holds true whether $\delta\in (0, 1)$ is well-defined or not, because both side of (\ref{4.10}) are equal to zero. On the other word, 
if $u_{j}=0$ for some $j=1, \cdots, p$, then the $j$-th term is vanished due to $D_{\Floor{nz},j}^{-1}(0)=0$ and the $j$-th 
derivatives $\dot{C}_{j}$ also vanish at this point. By Proposition \ref{Proposition-3.4}, the Bahadur-Kiefer processes will be approximated as
\begin{equation*}
\sup_{z\in[0,1], u_{j}\in[0,1]}\left|\frac{\Floor{nz}}{q_{nj}}\{D_{\Floor{nz},j}^{-1}(u_{j})-u\}+\frac{\Floor{nz}}{q_{nj}}
\{D_{\Floor{nz},j}(u_{j})-u_{j}\}\right|\stackrel{p}\rightarrow 0.
\end{equation*}
as $n\rightarrow\infty$. If the Condition \ref{Condition-4.4} satisfies, then we know that the first 
order partial derivatives $\dot{C}_{j}$ in $[0,1]$ such that
\begin{align*}
&\sup_{\mathbf{u}\in[0,1]^{p}, z\in[0,1]}\left|\frac{\Floor{nz}}{q_{n}(r_{1}\cdots r_{p})}\{C(v_{\Floor{nz}}(\mathbf{u}))-C(\mathbf{u})\}+
\sum_{j=1}^{p}\dot{C}_{j}(\mathbf{u}+\delta\{v_{\Floor{nz}}(\mathbf{u})-\mathbf{u}\})\mathbb{B}_{\Floor{nz},j}(u_{j})\right|
\stackrel{p}\rightarrow 0,
\end{align*}
as $n\rightarrow \infty$. Finally, it remains to show that
\begin{equation*}
\sup_{\mathbf{u}\in[0,1]^{p}, z\in[0,1]}R_{\Floor{nz},j}(\mathbf{u})=\left|\dot{C}_{j}(\mathbf{u}
+\delta\{v_{\Floor{nz}}(\mathbf{u})-\mathbf{u}\})-\dot{C}_{j}(\mathbf{u})\right||D_{\Floor{nz},j}(u_{j})-u_{j}|\stackrel{p}{\rightarrow} 0.
\end{equation*}
as $n\rightarrow \infty$. As in \cite{Segers2012}, for fixed $\varepsilon>0$ and $\omega\in(0, 1/2)$, we can decompose the probability
of $R_{\Floor{nz},j}(\mathbf{u})$ over supremum $\mathbf{u}\in [0, 1]^{p}$ based on the intervals $u_{j}\in [\omega, 1-\omega]$ and $u_{j}\in
[0, \omega)\cup (1-\omega, 1]$ respectively, i.e., 
\begin{align}\label{4.11}
\mathbf{P}\left(\sup_{\mathbf{u}\in[0,1]^{p}, z\in[0,1]}R_{\Floor{nz},j}(\mathbf{u})>\varepsilon\right)&\leq
\mathbf{P}\left(\sup_{\mathbf{u}\in[0,1]^{p}, z\in[0,1], u_{j}\in[\omega, 1-\omega]}R_{\Floor{nz},j}(\mathbf{u})>\varepsilon\right)\nonumber\\
&\quad+\mathbf{P}\left(\sup_{\mathbf{u}\in[0,1]^{p},z\in[0,1], u_{j}\notin[\omega, 1-\omega]}R_{\Floor{nz},j}(\mathbf{u})>\varepsilon\right). 
\end{align}
For the first term on the right-hand side of (\ref{4.11}), we derive that the probability converges to zero. Since
\begin{equation*}
\sup_{z\in[0,1],\mathbf{u}\in[0,1]^{p}}|v_{\Floor{nz}}(\mathbf{u})-\mathbf{u}|\rightarrow 0,
\end{equation*}
almost surely and the partial derivatives $\dot{C}_{j}$ are uniformly continuous on the set
$\{\mathbf{u}\in[0,1]^{p}, u_{j}\in[\omega/2, 1-\omega/2]\}$
and bounded by $0\leq \dot{C}_{j}(\mathcal{A})\leq 1$ and the supremum of the empirical process $\frac{\Floor{nz}}{q_{nj}}
\sup_{u\in[0,1]}|D_{\Floor{nz},j}(u_{j})-u_{j}|$ is bounded in distribution. As for the second term on the right hand side of (\ref{4.11}),
by using the portmanteau lemma we can bound the probability by
\begin{equation*}
\limsup_{n\rightarrow\infty}\mathbf{P}\left(\sup_{z\in[0,1],u_{j}\in[0, \omega)\cup(1-\omega, 1]}|\mathbb{B}_{\Floor{nz},
j}(u_{j})|\geq \varepsilon\right)
\leq \mathbf{P}\left(\sup_{u_{j}\in[0, \omega)\cup(1-\omega, 1]}|\mathbb{B}_{j}(u_{j})|\geq \varepsilon\right).
\end{equation*}
Hence, for any small $\mu>0$, the probability (\ref{4.11}) is
\begin{equation*}
\limsup_{n\rightarrow\infty} \mathbf{P}\left(\sup_{\mathbf{u}\in[0,1]^{p}, z\in[0,1]}R_{\Floor{nz},j}(\mathbf{u})>\varepsilon\right)\leq\mu,
\end{equation*}
which complete the proof of theorem.
\end{proof}

\begin{remark} These convergence results can be extended to Gaussian subordinated processes with covariance 
\begin{equation*}
\gamma_{\eta, j}(k):\sim L_{\eta, j}(k)|k|^{2d_{j}-1}, \quad d_{1}, \cdots, d_{p}\in (0, \frac{1}{2}),
\end{equation*}
where the memory parameters $d_{j}$ are not necessarily all equal. 
\end{remark}

\begin{remark}
$\mathbb{B}_{\Floor{nz}}(\mathbf{u})$ in Lemma \ref{Lemma-4.1} and Lemma \ref{Lemma-4.2} with the case $r=1$ and $r=2$ we can establish
the asymptotic properties of the sequential empirical copula processes $\mathbb{C}_{\Floor{nz}}$ by using the same arguments as in 
Theorem \ref{Theorem-4.5}. More precisely, the asymptotic behavior 
of the empirical copula process $\mathbb{C}_{\Floor{nz}}$ under Condition \ref{Condition-4.4} with lower Hermite rank can also be derived
by the methods of Segers \cite{Segers2012} or functional delte method \cite{Bucher2013}. 
The functional delta methods studied by \cite{Bucher2013} are applied to more general setting. 
\end{remark}

\begin{remark}
If the memory parameters $d_{j}=0$, then the stationary Gaussian sequences $\{\eta_{t,j}\}_{t\in\mathbb{Z}}$ display shor-memory 
behavior and the autocovariance functions are summable. More specifically, if $\sum_{k\in\mathbb{Z}}|\gamma_{\eta, j}(k)|^{r}<\infty$,
then
\begin{equation*}
\mathbb{B}_{n}(\mathbf{u})=\sqrt{n}\{D_{n}(\mathbf{u})-C(\mathbf{u})\}\rightsquigarrow \mathbb{B}_{C}
\end{equation*}
in metric space $D([0,1]^{p})$, where $\mathbb{B}_{C}$ is mean-zero Gaussian field with covariance
\begin{equation*}
\text{Cov}(\mathbb{B}_{C}(\mathbf{u}), \mathbb{B}_{C}(\mathbf{u}))=\sum_{t\in\mathbb{Z}}\text{Cov}\left(
\mathbb{I}(\mathbf{U}_{0}\leq \mathbf{u}), \mathbb{I}(\mathbf{U}_{t}\leq \mathbf{v})\right), \quad\mathbf{u}, \mathbf{v}\in[0,1]^{p}.
\end{equation*}
Furthermore, under Condition \ref{Condition-4.4} and by using the same method, the empirical copula processes $\mathbb{C}_{\Floor{nz}}$
converges weakly to a Gaussian field $\mathbb{G}_{C}$ in $\ell^{\infty}([0,1]^{p})$, which can be expressed as
\begin{equation*}
\mathbb{G}_{C}(\mathbf{u})=\mathbb{B}_{C}(\mathbf{u})-\sum_{j=1}^{p}\dot{C}_{j}(\mathbf{u})\mathbb{B}_{j}(u_{j}), 
\quad\mathbf{u}\in[0,1]^{p}.
\end{equation*}
This can also be achieved by using a functional delta method introduced by \cite{Bucher2013}.
\end{remark}

\section{Conclusion}

\hspace{0.5cm}
We mainly focus on the problem of nonparametric estimation for copula-based time series models under long-range dependence.
The multivariate copula-based time series are subordinated by nonlinear transformation of Gaussian processes, where the observable 
time series exhibit long-memory behavior. We establish limit theorems for the sequential empirical copula processes
$\mathbb{C}_{\Floor{nz}}$ in the context of long-memory. This is essential step to testing problems and it plays a central role
in many applications of copulas.

On the other hand, there are many open problems for future research including semiparametric estimations for copula paramters under Gaussian
subordinated long-memory processes and linear moving avarage process (linear long-memory process), other copula families and 
goodness-of-fit tests ect.\\

{\bf Acknowledgements}\\

I would like to thank Prof. Jan Beran give me a helpful idea and necessary advice to finish the paper.

\bibliographystyle{authordate3}
\bibliography{lit.bib}

\end{document}